%% LyX 2.1.4 created this file.  For more info, see http://www.lyx.org/.
%% Do not edit unless you really know what you are doing.
\documentclass[oneside,english]{amsart}
\usepackage[T1]{fontenc}
\usepackage[latin9]{inputenc}
\setlength{\parskip}{\bigskipamount}
\setlength{\parindent}{0pt}
\usepackage[active]{srcltx}
\usepackage{float}
\usepackage{amsthm}
\usepackage{setspace}
\setstretch{1.1}

\makeatletter

%%%%%%%%%%%%%%%%%%%%%%%%%%%%%% LyX specific LaTeX commands.
%% Because html converters don't know tabularnewline
\providecommand{\tabularnewline}{\\}

%%%%%%%%%%%%%%%%%%%%%%%%%%%%%% Textclass specific LaTeX commands.
\numberwithin{equation}{section}
\numberwithin{figure}{section}
\theoremstyle{plain}
\newtheorem{thm}{\protect\theoremname}
  \theoremstyle{plain}
  \newtheorem{lem}[thm]{\protect\lemmaname}
  \theoremstyle{plain}
  \newtheorem{cor}[thm]{\protect\corollaryname}

%%%%%%%%%%%%%%%%%%%%%%%%%%%%%% User specified LaTeX commands.
\usepackage{tikz}

\makeatother

\usepackage{babel}
  \providecommand{\corollaryname}{Corollary}
  \providecommand{\lemmaname}{Lemma}
\providecommand{\theoremname}{Theorem}

\begin{document}

\title{Eternal Picaria}

\author{Urban Larsson}

\author{Israel Rocha}

\thanks{Urban Larsson, urban031@gmail.com (partly supported by the Killam
Trust) and Israel Rocha, israel.rocha@dal.ca, Department of Mathematics
and Statistics, Dalhousie University, Halifax, Canada.}
\begin{abstract}
Picaria is a traditional board game, played by the Zuni tribe of the
American Southwest and other parts of the world, such as a rural Southwest
region in Sweden. It is related to the popular children's game of
Tic-tac-toe, but the 2 players have only 3 stones each, and in the
second phase of the game, pieces are slided, along specified move
edges, in attempts to create the three-in-a-row. We provide a rigorous
solution, and prove that the game is a draw; moreover our solution
gives insights to strategies that players can use.
\end{abstract}

\maketitle

\section{Introduction}

Picaria is a traditional board game, played by the Zuni tribe of the
American Southwest and other parts of the world, such as in a rural
Southwest region in Sweden\footnote{The `first' author played this game as a child with his grandparents
in the village Rångedala close to the Swedish city Borås, and the
game was called ``luffarschack''.}. The game is related to the popular children's game of Tic-tac-toe,
but it is even more related to other three-in-a-row games such as
Three men's morris, Tapatan, Nine Holes, Achi, Tant Fant and Shisma.
These latter games are sometimes played in two phases, the first phase
being placement of stones, and the second part being sliding of stones
along prescribed `move edges'. In either case, the possibility of
infinite play puts Picaria in a different class than Tic-tac-toe. 

The `blockade' games of Pong Hau K'i, from China, and Mu Torere, played
by the M\={a}ori people from the east coast of New Zealand's North
Island, are also related (in these games, if a player cannot move,
he loses); with quite few positions, only 16 and 46 respectively,
these games are solved \cite{key-1} by depicting the position graphs.

In Picaria, there are two players who alternate turns, and the goal
is to be the first player to place 3 game pieces of a kind in a row,
vertically, horizontally or diagonally. Each player has their own
type of pieces, say, to use the convention of Tic-tac-toe, X and O.
In our study, we assume that player X starts. The players alternate
turn to (in phase 1) place their stones in an open space in a 3 by
3 grid. When each piece has been played in the first phase (and assuming
a non-loss so far), then player X begins the second phase by sliding
one of the three Xs to an empty adjacent node; then O slides a stone,
and so on. Adjacency here means a neighboring node, horizontally,
vertically or diagonally. A game position is declared a draw if periodicity
of a pattern is forced by one of the players. In this paper we give
a constructive proof that the opening position (the empty board) is
a draw\footnote{The historical popularity of the game is probably due to the fact
that it is very easy to make a single mistake and then there is a
human player winner. As a children's game, by the author's experience,
the outcome is rarely a draw. After many plays though, over several
years, there is a kind of certainty that both players should be able
to draw, and here we show how. By this play heuristics we believe
that any proof would be non-trivial.}.

Picaria was described in the literature for the first time in 1907
by the ethnographer Stewart Culin \cite{key-2}. The original board
of Picaria is displayed in Figure \ref{fig:originalBoard}. The players
place stones on the vertices of the graph and slide along the edges.
In this paper we play the game in an equivalent manner using instead
a Tic-tac-toe board.\bigskip

\begin{center}
\begin{figure}[H]
\begin{centering}
\begin{tikzpicture}[scale=0.10,baseline=10] 
\draw[fill] (0,0) circle [radius=.6]; 
\draw[fill] (10,0) circle [radius=.6]; 
\draw[fill] (20,0) circle [radius=.6]; 
\draw[fill] (0,10) circle [radius=.6]; 
\draw[fill] (10,10) circle [radius=.6]; 
\draw[fill] (20,10) circle [radius=.6]; 
\draw[fill] (0,20) circle [radius=.6]; 
\draw[fill] (10,20) circle [radius=.6]; 
\draw[fill] (20,20) circle [radius=.6]; 
\draw (0,0) -- (20,0); 
\draw (0,10) -- (20,10); 
\draw (0,20) -- (20,20); 
\draw (0,0) -- (0,20); 
\draw (10,0) -- (10,20); 
\draw (20,0) -- (20,20); 
\draw (0,0) -- (20,20); 
\draw (0,20) -- (20,0); 
\draw (0,10) -- (10,0); 
\draw (10,20) -- (20,10); 
\draw (0,10) -- (10,20); 
\draw (10,0) -- (20,10); 

\draw (40,6.5) -- (60,6.5); 
\draw (40,13.5) -- (60,13.5); 
\draw (46.5,0) -- (46.5,20); 
\draw (53.5,0) -- (53.5,20); 

\end{tikzpicture}\vspace{8 mm}
\par\end{centering}

\caption{The original Picaria board and a Tic-tac-toe board}
\label{fig:originalBoard}
\end{figure}
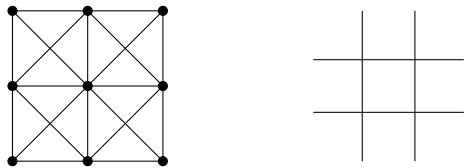

\par\end{center}

\vspace{-1.5 cm}Before all stones are on the board, the number of
positions coincide with those of Tic-tac-toe. Subsequently there are
456 positions modulo symmetries (see Section \ref{sec:counting} for
details) and many of those positions could be revisited during the
course of a game, so notably play is very different from Tic-tac-toe.
Even though a computer could be used to solve Picaria, this would
not provide a full understanding on how to play a succesful strategy.
In this paper we give the explicit \textit{strategies} of optimal
play, which means that perfect information players will not play to
draw if they can win and they will not play to lose if they can avoid
loss. The latter idea will be useful at particular stages of our play
proofs. In optimal play, if a player revisits a position, then the
game is a draw. It turns out that both players are able to draw from
the initial position. As a consequence, efficient strategies to win
by for example using \textit{Fork}-, \textit{Trap-, Race-, or Zugzwang-positions}
(as illustrated), will \textsl{not} occur in optimal play

\noindent \begin{center}
$\xrightarrow{x}$\textsf{}%
\begin{tabular}{cc|c|cc}
 &  & \textsf{x} &  & \tabularnewline
\cline{2-4} 
 & \textsf{o} & \textsf{x} & \textsf{o} & \;\;\;\tabularnewline
\cline{2-4} 
 & \textsf{x} &  & \textsf{o} & \tabularnewline
\multicolumn{5}{c}{{\footnotesize{}Fork}}\tabularnewline
\end{tabular} \textsf{$\xrightarrow{o}$}%
\begin{tabular}{cc|c|cc}
 & \textsf{x} & \textsf{o} &  & \tabularnewline
\cline{2-4} 
\!\! & \textsf{o} & \textsf{x} &  & \;\;\;\tabularnewline
\cline{2-4} 
 & \textsf{x} & \textsf{o} &  & \tabularnewline
\multicolumn{5}{c}{{\footnotesize{}Trap}}\tabularnewline
\end{tabular}\textsf{$\xrightarrow{o}$}%
\begin{tabular}{cc|c|cc}
 &  &  & \textsf{o} & \tabularnewline
\cline{2-4} 
\!\! &  & \textsf{x} & \textsf{o} & \;\;\;\tabularnewline
\cline{2-4} 
 & \textsf{x} & \textsf{o} & \textsf{x} & \tabularnewline
\multicolumn{5}{c}{{\footnotesize{}Race}}\tabularnewline
\end{tabular} \textsf{$\xrightarrow{o}$}%
\begin{tabular}{cc|c|cc}
 & \textsf{x} & \textsf{o} &  & \tabularnewline
\cline{2-4} 
\!\! & \textsf{o} & \textsf{o} & \textsf{x} & \tabularnewline
\cline{2-4} 
 &  & \textsf{x} &  & \tabularnewline
\multicolumn{5}{c}{{\footnotesize{}Zugzwang}}\tabularnewline
\end{tabular}
\par\end{center}

Note here that a game position is depicted as a game board together
with a flag for who just moved. Often however we omit the move-flag,
because it is clear by the context which play position we discuss
(for example in the placement phase of the game). For example, play
just before the Trap-position involves a bad move by X 

\begin{center}
\textsf{}%
\begin{tabular}{cc|c|cc}
 &  & \textsf{x} & \textsf{o} & \tabularnewline
\cline{2-4} 
 & \textsf{o} & \textsf{x} &  & \tabularnewline
\cline{2-4} 
 & \textsf{x} & \textsf{o} &  & \tabularnewline
\multicolumn{5}{c}{}\tabularnewline
\end{tabular}\textsf{$\xrightarrow{x}$}%
\begin{tabular}{cc|c|cc}
 & \textsf{x} &  & \textsf{o} & \tabularnewline
\cline{2-4} 
 & \textsf{o} & \textsf{x} &  & \tabularnewline
\cline{2-4} 
 & \textsf{x} & \textsf{o} &  & \tabularnewline
\multicolumn{5}{c}{}\tabularnewline
\end{tabular}\textsf{$\xrightarrow{o}$}%
\begin{tabular}{cc|c|cc}
 & \textsf{x} & \textsf{o} &  & \tabularnewline
\cline{2-4} 
 & \textsf{o} & \textsf{x} &  & \tabularnewline
\cline{2-4} 
 & \textsf{x} & \textsf{o} &  & \tabularnewline
\multicolumn{5}{c}{{\footnotesize{}Trap}}\tabularnewline
\end{tabular}
\par\end{center}

Such play does not belong to X's strategy, and similar ideas are commonly
applied inside proofs.

\section{A Rigorous Play-analysis of Picaria}

In our convention X is the first player. We show that player O can
prevent X from winning, by forcing X to play a periodic sequence of
moves. That would prove that the game is a draw if X can prevent O
from winning too. We begin by proving that it is easy for X to avoid
loss.

\subsection{The second player cannot win Picaria}
\begin{thm}
\label{lem:A}Player $O$ cannot win.\end{thm}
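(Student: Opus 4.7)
The plan is to exhibit an explicit strategy for X that denies O any three-in-a-row. X opens by playing in the center. This single move blocks the four winning lines through the center---the two diagonals, the middle row, and the middle column---so O's only hope of winning lies on one of the four \emph{edge lines}: top row, bottom row, left column, right column.

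Next I would verify that X can use its remaining two placements to guarantee at least one X piece on each of these four edge lines. Each corner belongs to exactly two edge lines, and any diagonally opposite pair of corners together covers all four; the default plan is therefore for X to occupy opposite corners such as top-left and bottom-right. A brief case analysis handles O's possible interference: if O takes one of X's intended corners, X switches to the other diagonally opposite pair; if O creates a two-in-a-row on an edge line, X blocks by playing the vacant cell of that line. In every branch, X finishes the placement phase with every edge line containing at least one X piece, so O has neither three-in-a-row nor an immediate three-threat at the end of placement.

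For the sliding phase, the plan is to maintain the invariant that no edge line is entirely available to O---each such line either contains an X piece or holds at most one O piece. Because X slides first, X has the initiative to restore this invariant in response to each O slide; the Picaria adjacency graph, in which corners have only three neighbours, edge midpoints five, and the center eight, sharply constrains which cell O can actually reach in a single slide, so each new O threat can be matched by an X blocking slide. The main obstacle is ensuring that X's blocking slides do not simultaneously uncover a previously blocked line: with only three X pieces the bookkeeping is tight, and a finite case analysis (modulo the eight-fold dihedral symmetry of the board) is required to show that in every reachable post-placement configuration X has a response tree that either perpetually blocks every edge line or forces a position to repeat. In either case O can never complete three-in-a-row, so the theorem follows.
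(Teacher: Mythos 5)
There is a genuine gap, in fact two. First, your placement-phase claim --- that X can always finish placement with an X on each of the four edge lines --- is false as stated. Your plan needs X's two non-center stones to be a diagonally opposite pair of corners (the only way two cells can cover all four edge lines, other than combinations involving a corner plus the unique opposite corner). But O places twice before X's third stone and can occupy one corner from \emph{each} diagonal pair: after X takes the center, O takes the top-left corner; you switch to the other pair and take, say, the top-right corner; O then takes the bottom-left corner. Now the only cell lying on both the bottom row and the left column is the bottom-left corner, which O holds, so X's last stone cannot cover both remaining edge lines. Worse, in this branch O already has two stones on the left column, so X's third placement is forced to block rather than pursue your invariant, and the bottom row ends placement with no X on it. So the invariant you want to carry into the sliding phase need not hold when the sliding phase begins.

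Second, even granting the invariant, the sliding phase is where all the difficulty of this theorem lives, and your proposal defers it entirely to ``a finite case analysis \dots is required.'' That analysis is the proof; asserting that it exists is not doing it. Note also that X must slide \emph{every} turn, so X's own forced moves can break the invariant (and can vacate the center, reopening the four center lines your argument has discarded), and a blocking slide is only available if some X stone is adjacent to the cell that must be filled --- none of which you verify. The paper avoids all of this by giving a fully explicit forcing strategy: X opens in the center, and modulo symmetry O's reply is either a corner or an edge midpoint; in the corner case X forces a short cycle of positions (a ``Loop''-type repetition), and in the edge-midpoint case X actually wins via a Race position. You should either carry out your case analysis in full or replace the invariant argument with explicit forced lines as the paper does.
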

\begin{proof}
Player X starts by playing in the center and then there are two cases
to consider \bigskip

\textsf{}%
\begin{tabular}{cc|c|cc}
 &  &  & \textsf{o} & \tabularnewline
\cline{2-4} 
 &  & \textsf{x} &  & \tabularnewline
\cline{2-4} 
 &  &  &  & \tabularnewline
\multicolumn{5}{c}{\textsf{\footnotesize{}(i)}}\tabularnewline
\end{tabular} and \textsf{}%
\begin{tabular}{cc|c|cr@{\extracolsep{0pt}.}l}
 &  &  &  & \multicolumn{2}{c}{}\tabularnewline
\cline{2-4} 
 &  & \textsf{x} & \textsf{o} & \multicolumn{2}{c}{}\tabularnewline
\cline{2-4} 
 &  &  &  & \multicolumn{2}{c}{}\tabularnewline
\multicolumn{6}{c}{\textsf{\footnotesize{}(ii)}}\tabularnewline
\end{tabular}\textsf{}\\

For game (i)

\bigskip

\textsf{}%
\begin{tabular}{cc|c|cc}
 &  &  & \textsf{o} & \tabularnewline
\cline{2-4} 
 &  & \textsf{x} &  & \tabularnewline
\cline{2-4} 
 &  &  &  & \tabularnewline
\multicolumn{5}{c}{\textsf{\footnotesize{}(i)}}\tabularnewline
\end{tabular}\textsf{$\xrightarrow{x}$}%
\begin{tabular}{cc|c|cc}
 &  &  & \textsf{o} & \tabularnewline
\cline{2-4} 
 &  & \textsf{x} &  & \tabularnewline
\cline{2-4} 
 &  &  & \textsf{x} & \tabularnewline
\multicolumn{5}{c}{}\tabularnewline
\end{tabular}\textsf{$\xrightarrow{o}$}%
\begin{tabular}{cc|c|cc}
 & \textsf{o} &  & \textsf{o} & \tabularnewline
\cline{2-4} 
 &  & \textsf{x} &  & \tabularnewline
\cline{2-4} 
 &  &  & \textsf{x} & \tabularnewline
\multicolumn{5}{c}{}\tabularnewline
\end{tabular}\textsf{$\xrightarrow{x}$}%
\begin{tabular}{cc|c|cc}
 & \textsf{o} & \textsf{x} & \textsf{o} & \tabularnewline
\cline{2-4} 
 &  & \textsf{x} &  & \tabularnewline
\cline{2-4} 
 &  &  & \textsf{x} & \tabularnewline
\multicolumn{5}{c}{}\tabularnewline
\end{tabular}\textsf{}\\

\bigskip\textsf{$\xrightarrow{o}$}%
\begin{tabular}{cc|c|cc}
 & \textsf{o} & \textsf{x} & \textsf{o} & \tabularnewline
\cline{2-4} 
 &  & \textsf{x} &  & \tabularnewline
\cline{2-4} 
 &  & \textsf{o} & \textsf{x} & \tabularnewline
\multicolumn{5}{c}{{\footnotesize{}(A)}}\tabularnewline
\end{tabular}\textsf{$\xrightarrow{x}$}%
\begin{tabular}{cc|c|cc}
 & \textsf{o} & \textsf{x} & \textsf{o} & \tabularnewline
\cline{2-4} 
 &  & \textsf{x} & \textsf{x} & \tabularnewline
\cline{2-4} 
 &  & \textsf{o} &  & \tabularnewline
\multicolumn{5}{c}{}\tabularnewline
\end{tabular} 

\bigskip Thus, X can force a return to the game (A), depicted above,
by 

\bigskip

\textsf{$\xrightarrow{o}$}%
\begin{tabular}{cc|c|cc}
 &  & \textsf{x} & \textsf{o} & \tabularnewline
\cline{2-4} 
 & \textsf{o} & \textsf{x} & \textsf{x} & \tabularnewline
\cline{2-4} 
 &  & \textsf{o} &  & \tabularnewline
\multicolumn{5}{c}{}\tabularnewline
\end{tabular}\textsf{$\xrightarrow{x}$}%
\begin{tabular}{cc|c|cc}
 &  & \textsf{x} & \textsf{o} & \tabularnewline
\cline{2-4} 
 & \textsf{o} & \textsf{x} &  & \tabularnewline
\cline{2-4} 
 &  & \textsf{o} & \textsf{x} & \tabularnewline
\multicolumn{5}{c}{}\tabularnewline
\end{tabular}\textsf{$\xrightarrow{o}$}%
\begin{tabular}{cc|c|cc}
 & \textsf{o} & \textsf{x} & \textsf{o} & \tabularnewline
\cline{2-4} 
 &  & \textsf{x} &  & \tabularnewline
\cline{2-4} 
 &  & \textsf{o} & \textsf{x} & \tabularnewline
\multicolumn{5}{c}{}\tabularnewline
\end{tabular} 

\bigskip

On the other hand, game\textsf{ (ii)}%
\begin{tabular}{cc|c|cc}
 &  &  &  & \tabularnewline
\cline{2-4} 
 &  & \textsf{x} & \textsf{o} & \tabularnewline
\cline{2-4} 
 &  &  &  & \tabularnewline
\multicolumn{5}{c}{}\tabularnewline
\end{tabular} is losing for player O, since 

\textsf{}%
\begin{tabular}{cc|c|cc}
 &  &  &  & \tabularnewline
\cline{2-4} 
 &  & \textsf{x} & \textsf{o} & \tabularnewline
\cline{2-4} 
 &  &  &  & \tabularnewline
\multicolumn{5}{c}{\textsf{\footnotesize{}(ii)}}\tabularnewline
\end{tabular}\textsf{$\xrightarrow{x}$}%
\begin{tabular}{cc|c|cc}
 &  &  &  & \tabularnewline
\cline{2-4} 
 &  & \textsf{x} & \textsf{o} & \tabularnewline
\cline{2-4} 
 & \textsf{x} &  &  & \tabularnewline
\multicolumn{5}{c}{}\tabularnewline
\end{tabular}\textsf{$\xrightarrow{o}$}%
\begin{tabular}{cc|c|cc}
 &  &  & \textsf{o} & \tabularnewline
\cline{2-4} 
 &  & \textsf{x} & \textsf{o} & \tabularnewline
\cline{2-4} 
 & \textsf{x} &  &  & \tabularnewline
\multicolumn{5}{c}{}\tabularnewline
\end{tabular}\textsf{$\xrightarrow{x}$}%
\begin{tabular}{cc|c|cc}
 &  &  & \textsf{o} & \tabularnewline
\cline{2-4} 
 &  & \textsf{x} & \textsf{o} & \tabularnewline
\cline{2-4} 
 & \textsf{x} &  & \textsf{x} & \tabularnewline
\multicolumn{5}{c}{}\tabularnewline
\end{tabular}\textsf{$\xrightarrow{o}$}%
\begin{tabular}{cc|c|cc}
 &  &  & \textsf{o} & \tabularnewline
\cline{2-4} 
 &  & \textsf{x} & \textsf{o} & \tabularnewline
\cline{2-4} 
 & \textsf{x} & \textsf{o} & \textsf{x} & \tabularnewline
\multicolumn{5}{c}{}\tabularnewline
\end{tabular} 

\bigskip and this is a Race-position, from which player X wins in
two moves.
\end{proof}

\subsection{The first player cannot win playing from a Loop position}

Next we analyse a special configuration which is quite recurring in
the game, called a \textit{Loop position}:\bigskip

\begin{center}
\textsf{$\xrightarrow{o}$}%
\begin{tabular}{cc|c|cc}
 &  &  & \textsf{o} & \tabularnewline
\cline{2-4} 
 & \textsf{o} & \textsf{x} & \textsf{x} & \tabularnewline
\cline{2-4} 
 & \textsf{x} &  & \textsf{o} & \tabularnewline
\multicolumn{5}{c}{{\footnotesize{}Loop}}\tabularnewline
\end{tabular}\bigskip
\par\end{center}

Any other symmetric position (a rotation or reflection; see for example
game (A) in the proof of Theorem \ref{lem:A}) is also called a Loop
position\footnote{We use loop in the sense of something reappearing, although in graph
theoretical terms it would be more correct to call this a cycle.}. The reason for this will be clear in the next result, Theorem \ref{thm:XLoop},
where we show that O prevents X from winning the game by means of
a periodic sequence of moves. Consider that player X \textit{holds
the center}. This restricts the possibilities for X in that only the
two outer stones can be moves. In particular if X starts from the
Loop position, then player X cannot win, which constitutes our next
lemma. When Picaria is played by human (non-optimal) players, a player
holding the center often appears to enjoy a certain advantage. The
next results also revolve around the idea of a player either holding
or leaving the center.\smallskip
\begin{lem}
\label{lem:Xholdsthecenter}If player X is to move and it refuses
to leave the center starting from a Loop position, then player O can
force a return to this position.\end{lem}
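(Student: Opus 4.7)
The plan is to perform case analysis on X's legal moves from the given Loop position, exploiting the constraint that X holds the center. Only the two outer X-stones can be relocated, and enumerating their empty adjacent cells yields exactly three candidate moves: $(2,3)\to(1,2)$, $(2,3)\to(3,2)$, and $(3,1)\to(3,2)$.

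For each of these three X-moves I would exhibit an explicit short sequence of forced moves ending at a Loop position. The general principle is that O responds either by blocking a three-in-a-row threat that X has just created, or by creating a fresh threat of its own to which X has a unique non-losing reply; the forced exchange then brings play back to a Loop. Branches 1 and 2 are the easiest: in each, O's first response triggers a single forced block by X (typically an X-stone moving back to $(2,3)$ to stop a column-$3$ threat), after which one more O move restores the original Loop in exactly four half-moves.

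The main obstacle is branch 3, the move $(3,1)\to(3,2)$, which creates an immediate column-$2$ threat that O must meet. Here one must verify that only $(1,3)\to(1,2)$ is an admissible block, because the alternative $(2,1)\to(1,2)$ hands X the win via $(3,2)\to(2,1)$ on row~$2$. After O's correct block, X has two sub-options, each creating an anti-diagonal threat through the center. One sub-option is answered by $(1,2)\to(1,3)$ and returns play to the original Loop; the other is answered by the forced block $(2,1)\to(3,1)$ and lands at the main-diagonal reflection of the original Loop, which by the convention introduced in the paper is itself a Loop position.

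The routine but essential verification in every branch is that the move attributed to each player is the unique one compatible with not losing immediately. This uses Theorem~\ref{lem:A} together with direct one-move-win checks to eliminate short-sighted alternatives (for instance, O must not give up the blockade that keeps row~$2$ or an anti-diagonal closed), thereby pinning down the forced sequences and establishing the claim.
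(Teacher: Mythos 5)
Your case split into the three moves $(2,3)\to(1,2)$, $(2,3)\to(3,2)$ and $(3,1)\to(3,2)$ is exactly the decomposition the paper uses, and two of your three branches check out. Branch 1 works precisely as you describe (O slides a stone to $(3,2)$, threatening to complete column 3 at $(2,3)$; X's only center-preserving block is $(1,2)\to(2,3)$; O then retracts to restore the Loop). Branch 3 is also correct, and you even supply the justification the paper leaves implicit, namely that the block $(2,1)\to(1,2)$ loses to $(3,2)\to(2,1)$ on row 2; both of your sub-lines do end at Loop positions (one the original, one its main-diagonal reflection, which the paper's convention accepts).

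The gap is branch 2, the move $(2,3)\to(3,2)$, reaching X on $(2,2),(3,1),(3,2)$ and O on $(1,3),(2,1),(3,3)$. There is no column-3 threat here for X to block: the only empty cell of that column is $(2,3)$, and the only O-stones adjacent to it are $(1,3)$ and $(3,3)$, which already lie on the column, so O cannot complete it in one slide and no single O move creates such a threat. The operative phenomenon in this branch is immobilization, not blocking: $(3,1)$ has no empty neighbour at all, and the only empty neighbour of $(3,2)$ is $(2,3)$. The paper exploits this by answering with $(1,3)\to(2,3)$, which freezes \emph{both} non-center X-stones, so a center-holding X has no legal move, must abandon the center, and loses; the branch is thereby excluded from X's strategy rather than looped. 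If you insist on your advertised four-half-move return to the Loop, the move that works is $(1,3)\to(1,2)$: X's unique center-preserving reply is then $(3,2)\to(2,3)$ --- forced by having no alternative, not by any threat --- and $(1,2)\to(1,3)$ (which is then itself forced, as it blocks X's new anti-diagonal threat at $(1,3)$) restores the Loop. Either repair is fine, but the verification you defer to (``block the column-3 threat'') cannot be carried out in this branch as written.
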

\begin{proof}
There are only three possible moves from a Loop-position for X, since
it holds the center.\bigskip

\textsf{}%
\begin{tabular}{cc|c|cc}
 &  &  & \textsf{o} & \tabularnewline
\cline{2-4} 
 & \textsf{o} & \textsf{x} &  & \tabularnewline
\cline{2-4} 
 & \textsf{x} & \textsf{x} & \textsf{o} & \tabularnewline
\multicolumn{5}{c}{{\footnotesize{}(A)}}\tabularnewline
\end{tabular}\textsf{}%
\begin{tabular}{cc|c|cc}
 &  & \textsf{x} & \textsf{o} & \tabularnewline
\cline{2-4} 
 & \textsf{o} & \textsf{x} &  & \tabularnewline
\cline{2-4} 
 & \textsf{x} &  & \textsf{o} & \tabularnewline
\multicolumn{5}{c}{{\footnotesize{}(B)}}\tabularnewline
\end{tabular}\textsf{}%
\begin{tabular}{cc|c|cc}
 &  &  & \textsf{o} & \tabularnewline
\cline{2-4} 
 & \textsf{o} & \textsf{x} & \textsf{x} & \tabularnewline
\cline{2-4} 
 &  & \textsf{x} & \textsf{o} & \tabularnewline
\multicolumn{5}{c}{{\footnotesize{}(C)}}\tabularnewline
\end{tabular}\bigskip

For game (A), player X gets trapped by \bigskip

\textsf{}%
\begin{tabular}{cc|c|cc}
 &  &  & \textsf{o} & \tabularnewline
\cline{2-4} 
 & \textsf{o} & \textsf{x} &  & \tabularnewline
\cline{2-4} 
 & \textsf{x} & \textsf{x} & \textsf{o} & \tabularnewline
\multicolumn{5}{c}{{\footnotesize{}(A)}}\tabularnewline
\end{tabular}\textsf{$\xrightarrow{o}$}%
\begin{tabular}{cc|c|cc}
 &  &  &  & \tabularnewline
\cline{2-4} 
 & \textsf{o} & \textsf{x} & o & \tabularnewline
\cline{2-4} 
 & \textsf{x} & \textsf{x} & \textsf{o} & \tabularnewline
\multicolumn{5}{c}{}\tabularnewline
\end{tabular} \bigskip 

So X would have no option but give up the center and clearly loses
the game. Therefore, the game (A) does not belong to X's strategy.

Now, player O can force either of the following sequences \bigskip

\textsf{}%
\begin{tabular}{cc|c|cc}
 &  & \textsf{x} & \textsf{o} & \tabularnewline
\cline{2-4} 
 & \textsf{o} & \textsf{x} &  & \tabularnewline
\cline{2-4} 
 & \textsf{x} &  & \textsf{o} & \tabularnewline
\multicolumn{5}{c}{{\footnotesize{}(B)}}\tabularnewline
\end{tabular}\textsf{$\xrightarrow{o}$}%
\begin{tabular}{cc|c|cc}
 &  & \textsf{x} & \textsf{o} & \tabularnewline
\cline{2-4} 
 &  & \textsf{x} &  & \tabularnewline
\cline{2-4} 
 & \textsf{x} & \textsf{o} & \textsf{o} & \tabularnewline
\multicolumn{5}{c}{}\tabularnewline
\end{tabular}\textsf{$\xrightarrow{x}$}%
\begin{tabular}{cc|c|cc}
 &  &  & \textsf{o} & \tabularnewline
\cline{2-4} 
 &  & \textsf{x} & \textsf{x} & \tabularnewline
\cline{2-4} 
 & \textsf{x} & \textsf{o} & \textsf{o} & \tabularnewline
\multicolumn{5}{c}{}\tabularnewline
\end{tabular}\textsf{$\xrightarrow{o}$}%
\begin{tabular}{cc|c|cc}
 &  &  & \textsf{o} & \tabularnewline
\cline{2-4} 
 & \textsf{o} & \textsf{x} & \textsf{x} & \tabularnewline
\cline{2-4} 
 & \textsf{x} &  & \textsf{o} & \tabularnewline
\multicolumn{5}{c}{}\tabularnewline
\end{tabular}\bigskip

or \bigskip

\textsf{}%
\begin{tabular}{cc|c|cc}
 &  & \textsf{x} & \textsf{o} & \tabularnewline
\cline{2-4} 
 & \textsf{o} & \textsf{x} &  & \tabularnewline
\cline{2-4} 
 & \textsf{x} &  &  & \tabularnewline
\multicolumn{5}{c}{{\footnotesize{}(B)}}\tabularnewline
\end{tabular}\textsf{$\xrightarrow{o}$}%
\begin{tabular}{cc|c|cc}
 &  & \textsf{x} & \textsf{o} & \tabularnewline
\cline{2-4} 
 & \textsf{o} & \textsf{x} &  & \tabularnewline
\cline{2-4} 
 & \textsf{x} & o &  & \tabularnewline
\multicolumn{5}{c}{}\tabularnewline
\end{tabular}\textsf{$\xrightarrow{x}$}%
\begin{tabular}{cc|c|cc}
 &  &  & \textsf{o} & \tabularnewline
\cline{2-4} 
 & \textsf{o} & \textsf{x} & \textsf{x} & \tabularnewline
\cline{2-4} 
 & \textsf{x} & \textsf{o} &  & \tabularnewline
\multicolumn{5}{c}{}\tabularnewline
\end{tabular}\textsf{$\xrightarrow{o}$}%
\begin{tabular}{cc|c|cc}
 &  &  & \textsf{o} & \tabularnewline
\cline{2-4} 
 & \textsf{o} & \textsf{x} & \textsf{x} & \tabularnewline
\cline{2-4} 
 & \textsf{x} &  & \textsf{o} & \tabularnewline
\multicolumn{5}{c}{}\tabularnewline
\end{tabular}\bigskip

which is the initial (Loop) position. Note that in the second diagram
X does not move into the Trap position. Finally, \bigskip

\textsf{}%
\begin{tabular}{cc|c|cc}
 &  &  & \textsf{o} & \tabularnewline
\cline{2-4} 
 & \textsf{o} & \textsf{x} & \textsf{x} & \tabularnewline
\cline{2-4} 
 &  & \textsf{x} & \textsf{o} & \tabularnewline
\multicolumn{5}{c}{{\footnotesize{}(C)}}\tabularnewline
\end{tabular}\textsf{$\xrightarrow{o}$}%
\begin{tabular}{cc|c|cc}
 &  & \textsf{o} &  & \tabularnewline
\cline{2-4} 
 & \textsf{o} & \textsf{x} & \textsf{x} & \tabularnewline
\cline{2-4} 
 &  & \textsf{x} & \textsf{o} & \tabularnewline
\multicolumn{5}{c}{}\tabularnewline
\end{tabular} \bigskip

Now by symmetry X moves to \bigskip

\textsf{$\xrightarrow{x}$}%
\begin{tabular}{cc|c|cc}
 &  & \textsf{o} & \textsf{x} & \tabularnewline
\cline{2-4} 
 & \textsf{o} & \textsf{x} &  & \tabularnewline
\cline{2-4} 
 &  & \textsf{x} & \textsf{o} & \tabularnewline
\multicolumn{5}{c}{}\tabularnewline
\end{tabular}\textsf{$\xrightarrow{o}$}%
\begin{tabular}{cc|c|cc}
 &  & \textsf{o} & \textsf{x} & \tabularnewline
\cline{2-4} 
 &  & \textsf{x} &  & \tabularnewline
\cline{2-4} 
 & \textsf{o} & \textsf{x} & \textsf{o} & \tabularnewline
\multicolumn{5}{c}{}\tabularnewline
\end{tabular} \bigskip

which returns to the initial Loop-position.
\end{proof}
The next lemma concerns a `dual' result for a Loop-position.\bigskip
\begin{lem}
\label{lem:LemmaOloop}If O is to move from \textsf{}%
\begin{tabular}{cc|c|cc}
 &  &  & \textsf{x} & \tabularnewline
\cline{2-4} 
 & \textsf{x} & \textsf{o} & \textsf{o} & \tabularnewline
\cline{2-4} 
 & \textsf{o} &  & \textsf{x} & \tabularnewline
\multicolumn{5}{c}{}\tabularnewline
\end{tabular} then O can force a Loop.\end{lem}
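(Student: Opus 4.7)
The initial board is exactly the canonical Loop position with the roles of X and O interchanged: the three O stones sit on the squares occupied by X in the Loop, and the three X stones sit on the squares occupied by O. In particular, the target Loop has X in the centre while in the present position O is in the centre, so any successful strategy for O must at some point vacate the centre and let X slide into it. I would therefore model the proof as a short diagrammatic case tree, in exactly the same style as Lemma~\ref{lem:Xholdsthecenter}.

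First I would enumerate O's legal moves from the stated position: the centre O can slide to $(1,1)$, $(1,2)$, or the bottom-middle cell $(3,2)$; the middle-right O can slide to $(1,2)$ or $(3,2)$; the bottom-left O can only slide to $(3,2)$. A quick check discards those moves that allow X to complete three-in-a-row on the reply. Then I would commit to a specific opening move for O, the natural candidate being to slide the centre O to the bottom-middle cell, since the resulting board is the colour-swap of a position that already appears inside the proof of Lemma~\ref{lem:Xholdsthecenter} and therefore lets us recycle that case analysis with roles reversed.

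Next I would branch on X's reply. The key observation is that X has only a very small number of meaningful replies: by Theorem~\ref{lem:A}, any move that neither occupies the centre nor blocks an O threat would let O win, which is impossible, so such replies can be excluded from X's strategy. For each surviving X-reply I would supply a concrete follow-up move for O and then verify by direct inspection that the resulting board is one of the eight rotation/reflection copies of the canonical Loop, at which point the Lemma is proved and Lemma~\ref{lem:Xholdsthecenter} takes over to enforce the subsequent periodicity.

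The main obstacle is bookkeeping rather than insight. One has to check, branch by branch, that the final board is genuinely a symmetric copy of the Loop (and not, say, a Trap or Race that superficially resembles it) and that no intermediate O-move exposes O to a Race or Fork for X. I would handle this by folding symmetric cases together and sticking to the diagram-chain format already established in the preceding lemmas, so that the whole proof fits into a two-level diagram tree of roughly the same length as the proof of Lemma~\ref{lem:Xholdsthecenter}.
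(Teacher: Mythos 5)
Your proposal fails at the very first committed move. From the given position, sliding the centre O to the bottom-middle square produces the board with X at top-right, middle-left and bottom-right, O at middle-right, bottom-left and bottom-middle, and the centre empty. X answers by sliding the top-right X into the centre (these cells are adjacent along the anti-diagonal). X now holds the centre, the bottom-right corner and the middle-left cell, and threatens to complete the main diagonal by sliding the middle-left X to the top-left corner. O cannot parry: the two O stones on the bottom row are completely boxed in (every neighbour of the bottom-left and bottom-middle cells is now occupied), so O's only legal moves are with the middle-right stone, to the top-middle or top-right cell, neither of which blocks the top-left corner; and O has no immediate winning line of its own. X then wins on the next move. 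Your ``quick check'' that discards only those O-moves which let X complete three-in-a-row \emph{on the immediate reply} does not catch this, because X's reply does not complete a line --- it creates an unblockable threat one move later. The claimed identification of the resulting board with a colour-swap of a position inside the proof of Lemma~\ref{lem:Xholdsthecenter} is also incorrect (in that position X occupies the centre, not an edge cell).

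The flaw stems from your opening premise that O must eventually vacate the centre so that the board can become the canonical Loop position with X in the middle. The paper's proof does the opposite: O never leaves the centre. O's first move slides the \emph{middle-right} O diagonally to the top-middle cell, creating an immediate threat to complete the middle column (the bottom-left O is one step from the bottom-middle cell). X has exactly two ways to occupy the blocking square; in either case O then transfers the top-middle stone to the middle-right cell, renewing a threat, and X's forced reply restores the original position of the lemma, with O still holding the centre. The ``Loop'' being forced is thus a repetition of the stated position itself (periodicity, hence a draw), not a transition to the colour-reversed canonical Loop. Finally, your appeal to Theorem~\ref{lem:A} to prune X's replies is not valid here: that theorem asserts that O cannot win from the \emph{initial empty board} under X's optimal play; it does not license the inference that, in an arbitrary mid-game position, any X reply failing to lose immediately is safe, and using it that way would be circular.
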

\begin{proof}
Player O begins with\bigskip

\textsf{}%
\begin{tabular}{cc|c|cc}
 &  &  & \textsf{x} & \tabularnewline
\cline{2-4} 
 & \textsf{x} & \textsf{o} & \textsf{o} & \tabularnewline
\cline{2-4} 
 & \textsf{o} &  & \textsf{x} & \tabularnewline
\multicolumn{5}{c}{}\tabularnewline
\end{tabular}\textsf{$\xrightarrow{o}$}%
\begin{tabular}{cc|c|cc}
 &  & \textsf{o} & \textsf{x} & \tabularnewline
\cline{2-4} 
 & \textsf{x} & \textsf{o} &  & \tabularnewline
\cline{2-4} 
 & \textsf{o} &  & \textsf{x} & \tabularnewline
\multicolumn{5}{c}{}\tabularnewline
\end{tabular} \bigskip

Now X has two defense possibilities\bigskip

\textsf{$\xrightarrow{x}$}%
\begin{tabular}{cc|c|cc}
 &  & \textsf{o} & \textsf{x} & \tabularnewline
\cline{2-4} 
 &  & \textsf{o} &  & \tabularnewline
\cline{2-4} 
 & \textsf{o} & \textsf{x} & \textsf{x} & \tabularnewline
\multicolumn{5}{c}{{\footnotesize{}(A)}}\tabularnewline
\end{tabular}or \textsf{$\;\;\xrightarrow{x}$}%
\begin{tabular}{cc|c|cc}
 &  & \textsf{o} & \textsf{x} & \tabularnewline
\cline{2-4} 
 & \textsf{x} & \textsf{o} &  & \tabularnewline
\cline{2-4} 
 & \textsf{o} & \textsf{x} &  & \tabularnewline
\multicolumn{5}{c}{{\footnotesize{}(B)}}\tabularnewline
\end{tabular}\bigskip

For game (A) \bigskip

\textsf{}%
\begin{tabular}{cc|c|cc}
 &  & \textsf{o} & \textsf{x} & \tabularnewline
\cline{2-4} 
 &  & \textsf{o} &  & \tabularnewline
\cline{2-4} 
 & \textsf{o} & \textsf{x} & \textsf{x} & \tabularnewline
\multicolumn{5}{c}{{\footnotesize{}(A)}}\tabularnewline
\end{tabular}\textsf{$\xrightarrow{o}$}%
\begin{tabular}{cc|c|cc}
 &  &  & \textsf{x} & \tabularnewline
\cline{2-4} 
 &  & \textsf{o} & \textsf{o} & \tabularnewline
\cline{2-4} 
 & \textsf{o} & \textsf{x} & \textsf{x} & \tabularnewline
\multicolumn{5}{c}{}\tabularnewline
\end{tabular}\textsf{$\xrightarrow{x}$}%
\begin{tabular}{cc|c|cc}
 &  &  & \textsf{x} & \tabularnewline
\cline{2-4} 
 & \textsf{x} & \textsf{o} & \textsf{o} & \tabularnewline
\cline{2-4} 
 & \textsf{o} &  & \textsf{x} & \tabularnewline
\multicolumn{5}{c}{}\tabularnewline
\end{tabular}\textsf{ }which is the original position.

Similarly, for game (B) \bigskip

\textsf{}%
\begin{tabular}{cc|c|cc}
 &  & \textsf{o} & \textsf{x} & \tabularnewline
\cline{2-4} 
 & \textsf{x} & \textsf{o} &  & \tabularnewline
\cline{2-4} 
 & \textsf{o} & \textsf{x} &  & \tabularnewline
\multicolumn{5}{c}{{\footnotesize{}(B)}}\tabularnewline
\end{tabular}\textsf{$\xrightarrow{o}$}%
\begin{tabular}{cc|c|cc}
 &  &  & \textsf{x} & \tabularnewline
\cline{2-4} 
 & \textsf{x} & \textsf{o} & \textsf{o} & \tabularnewline
\cline{2-4} 
 & \textsf{o} & \textsf{x} &  & \tabularnewline
\multicolumn{5}{c}{}\tabularnewline
\end{tabular}\textsf{$\xrightarrow{x}$ }%
\begin{tabular}{cc|c|cc}
 &  &  & \textsf{x} & \tabularnewline
\cline{2-4} 
 & \textsf{x} & \textsf{o} & \textsf{o} & \tabularnewline
\cline{2-4} 
 & \textsf{o} &  & \textsf{x} & \tabularnewline
\multicolumn{5}{c}{}\tabularnewline
\end{tabular}\textsf{ }\bigskip

which is again the original position.\bigskip\end{proof}
\begin{lem}
\label{lem:Xleavesthecenter}If X is to move from Loop position, and
it leaves the center, then X cannot win.\end{lem}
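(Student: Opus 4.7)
The plan is to handle by cases the three moves by which X can slide the central stone off the center from a Loop position: into the corner $(1,1)$, or into the edge cells $(1,2)$ or $(3,2)$. In each case I intend to exhibit an O-response sequence that either directly returns play to the Loop position or reaches a position that, up to a symmetry of the board, matches the hypothesis of Lemma \ref{lem:LemmaOloop}. Once that lemma applies, O forces the ``dual'' Loop with O in the center; and a symmetric reading of Lemma \ref{lem:Xholdsthecenter}, with the roles of X and O swapped, then forces the dual Loop back to the original Loop. Iterating this alternation, O perpetually prevents any three-in-a-row, so X cannot win.

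First I would verify that vacating the center yields no immediate X-win: a quick line-by-line inspection shows that the O-stones at $(1,3)$, $(2,1)$, and $(3,3)$ block every row, column, and diagonal through the displaced X-stone, so there is no three-in-a-row threat on the turn immediately after X leaves the center. Next, for each of the three X-departures I would choose an O-slide that seizes the center---for example, $O(3,3)\to(2,2)$ in response to $X(2,2)\to(1,1)$---and then check by direct inspection that the resulting configuration coincides, up to a rotation or reflection, with the dual Loop of Lemma \ref{lem:LemmaOloop}. In the corner case this match is essentially immediate; in the edge cases it may require one further forced exchange to align the stones.

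The main obstacle is the two edge-landing sub-cases $(2,2)\to(1,2)$ and $(2,2)\to(3,2)$, where the displaced X-stone participates in more potential lines than it does in the corner case and X therefore has more freedom to generate threats. There I expect to need one or two extra forced exchanges before the dual-Loop configuration is reached, and I must verify that no intermediate X reply creates a fork or a forcing three-in-a-row that O cannot parry without abandoning the plan of either returning to Loop or reaching the setup of Lemma \ref{lem:LemmaOloop}. Once this case-analysis is complete the lemma closes the cycle, and the alternation between Loop and dual Loop can be iterated indefinitely, which is precisely what it means that X cannot win from the Loop position after leaving the center.
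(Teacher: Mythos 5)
Your plan has a genuine structural gap: it only treats the case where X vacates the center on its very first move from the Loop position. But for this lemma to combine with Lemma \ref{lem:Xholdsthecenter} into Theorem \ref{thm:XLoop}, one must also handle X leaving the center \emph{later}, i.e.\ after first shuffling an outer stone and entering one of the intermediate positions of Lemma \ref{lem:Xholdsthecenter}'s cycles (the paper shows the only new situation is X leaving the center on its second move after Loop, and devotes the entire second half of the proof to those positions). Your three-case decomposition never reaches them. Beyond that, the proposal is a strategy outline rather than a proof: every forcing line is deferred (``I must verify that no intermediate X reply creates a fork\dots''), and those verifications are the entire content of the result.

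The one concrete move you do commit to is also problematic. Against $X(2,2)\to(1,1)$ the paper plays $O(2,1)\to(2,2)$, which preserves the position's horizontal symmetry and sets up the double threat (column~2 and the main diagonal) that punishes one of X's replies outright; your $O(3,3)\to(2,2)$ breaks that symmetry, and after X's essentially forced reply $(1,1)\to(1,2)$ (the other replies lose at once to O's row-2 and anti-diagonal threats) O has no forcing continuation and the O-shape (corner, adjacent edge, center) is not congruent to the configuration of Lemma \ref{lem:LemmaOloop} --- so the match is not ``essentially immediate.'' Your expectation of where the difficulty lies is also inverted: in fact the edge departure toward X's own stones (to $(3,2)$) is an immediate loss for X, the other edge departure funnels into Lemma \ref{lem:LemmaOloop} after two forced moves, and the corner departure is the delicate case. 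Finally, the appeal to a role-swapped Lemma \ref{lem:Xholdsthecenter} to return from the dual Loop to the original Loop is unnecessary --- Lemma \ref{lem:LemmaOloop} already yields a self-sustaining cycle, which is all that is needed --- though this last point does not by itself invalidate the argument.
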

\begin{proof}
Suppose first that X leaves the center in the first move. There are
the following possibilities \bigskip

\textsf{}%
\begin{tabular}{cc|c|cc}
 &  & \textsf{x} & \textsf{o} & \tabularnewline
\cline{2-4} 
 & \textsf{o} &  & \textsf{x} & \tabularnewline
\cline{2-4} 
 & \textsf{x} &  & \textsf{o} & \tabularnewline
\multicolumn{5}{c}{{\footnotesize{}(A)}}\tabularnewline
\end{tabular}\textsf{}%
\begin{tabular}{cc|c|cc}
 &  &  & \textsf{o} & \tabularnewline
\cline{2-4} 
 & \textsf{o} &  & \textsf{x} & \tabularnewline
\cline{2-4} 
 & \textsf{x} & \textsf{x} & \textsf{o} & \tabularnewline
\multicolumn{5}{c}{{\footnotesize{}(B)}}\tabularnewline
\end{tabular}\textsf{}%
\begin{tabular}{cc|c|cc}
 & \textsf{x} &  & \textsf{o} & \tabularnewline
\cline{2-4} 
 & \textsf{o} &  & \textsf{x} & \tabularnewline
\cline{2-4} 
 & \textsf{x} &  & \textsf{o} & \tabularnewline
\multicolumn{5}{c}{{\footnotesize{}(C)}}\tabularnewline
\end{tabular}\bigskip

For game (A), O can force the sequence\bigskip

\textsf{}%
\begin{tabular}{cc|c|cc}
 &  & \textsf{x} & \textsf{o} & \tabularnewline
\cline{2-4} 
 & \textsf{o} &  & \textsf{x} & \tabularnewline
\cline{2-4} 
 & \textsf{x} &  & \textsf{o} & \tabularnewline
\multicolumn{5}{c}{{\footnotesize{}(A)}}\tabularnewline
\end{tabular}\textsf{$\xrightarrow{o}$}%
\begin{tabular}{cc|c|cc}
 &  & \textsf{x} &  & \tabularnewline
\cline{2-4} 
 & \textsf{o} & \textsf{o} & \textsf{x} & \tabularnewline
\cline{2-4} 
 & \textsf{x} &  & \textsf{o} & \tabularnewline
\multicolumn{5}{c}{}\tabularnewline
\end{tabular}\textsf{$\xrightarrow{x}$}%
\begin{tabular}{cc|c|cc}
 & \textsf{x} &  &  & \tabularnewline
\cline{2-4} 
 & \textsf{o} & \textsf{o} & \textsf{x} & \tabularnewline
\cline{2-4} 
 & \textsf{x} &  & \textsf{o} & \tabularnewline
\multicolumn{5}{c}{}\tabularnewline
\end{tabular}\bigskip

Now by Lemma \ref{lem:LemmaOloop}, player X cannot win. If X creates
game (B), then it Loses since O can do\bigskip

\textsf{}%
\begin{tabular}{cc|c|cc}
 &  &  & \textsf{o} & \tabularnewline
\cline{2-4} 
 & \textsf{o} &  & \textsf{x} & \tabularnewline
\cline{2-4} 
 & \textsf{x} & \textsf{x} & \textsf{o} & \tabularnewline
\multicolumn{5}{c}{{\footnotesize{}(B)}}\tabularnewline
\end{tabular}\textsf{$\xrightarrow{o}$}%
\begin{tabular}{cc|c|cc}
 &  &  &  & \tabularnewline
\cline{2-4} 
 & \textsf{o} & \textsf{o} & \textsf{x} & \tabularnewline
\cline{2-4} 
 & \textsf{x} & \textsf{x} & \textsf{o} & \tabularnewline
\multicolumn{5}{c}{}\tabularnewline
\end{tabular}\textsf{$\xrightarrow{x}$}%
\begin{tabular}{cc|c|cc}
 &  &  & \textsf{x} & \tabularnewline
\cline{2-4} 
 & \textsf{o} & \textsf{o} &  & \tabularnewline
\cline{2-4} 
 & \textsf{x} & \textsf{x} & \textsf{o} & \tabularnewline
\multicolumn{5}{c}{}\tabularnewline
\end{tabular}\textsf{$\xrightarrow{o}$}%
\begin{tabular}{cc|c|cc}
 &  &  & \textsf{x} & \tabularnewline
\cline{2-4} 
 & \textsf{o} & \textsf{o} & \textsf{o} & \tabularnewline
\cline{2-4} 
 & \textsf{x} & \textsf{x} &  & \tabularnewline
\multicolumn{5}{c}{}\tabularnewline
\end{tabular} \bigskip

So game (B) does not happen. For game (C), player O moves\bigskip

\textsf{}%
\begin{tabular}{cc|c|cc}
 & \textsf{x} &  & \textsf{o} & \tabularnewline
\cline{2-4} 
 & \textsf{o} &  & \textsf{x} & \tabularnewline
\cline{2-4} 
 & \textsf{x} &  & \textsf{o} & \tabularnewline
\multicolumn{5}{c}{{\footnotesize{}(C)}}\tabularnewline
\end{tabular}\textsf{$\xrightarrow{o}$}%
\begin{tabular}{cc|c|cc}
 & \textsf{x} &  & \textsf{o} & \tabularnewline
\cline{2-4} 
 &  & \textsf{o} & \textsf{x} & \tabularnewline
\cline{2-4} 
 & \textsf{x} &  & \textsf{o} & \tabularnewline
\multicolumn{5}{c}{}\tabularnewline
\end{tabular}\textsf{ }\bigskip

If player X moves \textsf{$\xrightarrow{x}$}%
\begin{tabular}{cc|c|cc}
 &  &  & \textsf{o} & \tabularnewline
\cline{2-4} 
 & \textsf{x} & \textsf{o} & \textsf{x} & \tabularnewline
\cline{2-4} 
 & \textsf{x} &  & \textsf{o} & \tabularnewline
\multicolumn{5}{c}{}\tabularnewline
\end{tabular}\textsf{$\xrightarrow{o}$}%
\begin{tabular}{cc|c|cc}
 &  & \textsf{o} &  & \tabularnewline
\cline{2-4} 
 & \textsf{x} & \textsf{o} & \textsf{x} & \tabularnewline
\cline{2-4} 
 & \textsf{x} &  & \textsf{o} & \tabularnewline
\multicolumn{5}{c}{}\tabularnewline
\end{tabular} \bigskip

by which player O wins the game. So instead X plays\bigskip

\textsf{$\xrightarrow{x}$}%
\begin{tabular}{cc|c|cc}
 &  & \textsf{x} & \textsf{o} & \tabularnewline
\cline{2-4} 
 &  & \textsf{o} & \textsf{x} & \tabularnewline
\cline{2-4} 
 & \textsf{x} &  & \textsf{o} & \tabularnewline
\multicolumn{5}{c}{}\tabularnewline
\end{tabular} and O responds by forcing X\bigskip

\textsf{$\xrightarrow{o}$}%
\begin{tabular}{cc|c|cc}
 &  & \textsf{x} & \textsf{o} & \tabularnewline
\cline{2-4} 
 &  & \textsf{o} & \textsf{x} & \tabularnewline
\cline{2-4} 
 & \textsf{x} & \textsf{o} &  & \tabularnewline
\multicolumn{5}{c}{}\tabularnewline
\end{tabular}\textsf{$\xrightarrow{x}$}%
\begin{tabular}{cc|c|cc}
 &  & \textsf{x} & \textsf{o} & \tabularnewline
\cline{2-4} 
 &  & \textsf{o} &  & \tabularnewline
\cline{2-4} 
 & \textsf{x} & \textsf{o} & \textsf{x} & \tabularnewline
\multicolumn{5}{c}{}\tabularnewline
\end{tabular} \bigskip

Now by Lemma \ref{lem:LemmaOloop}, player X cannot win.

Suppose next that player X leaves the center but not in the first
move. Now we use the games (A), (B) and (C) from Lemma \ref{lem:Xholdsthecenter},
and note that the only missing case is that X leaves the center in
the second move after the Loop-position (this follows because otherwise
O has already returned to Loop). Case (A) is not in X's strategy,
and case (B) becomes instead

\textsf{}%
\begin{tabular}{cc|c|cc}
 &  & \textsf{x} & \textsf{o} & \tabularnewline
\cline{2-4} 
 & \textsf{o} & \textsf{x} &  & \tabularnewline
\cline{2-4} 
 & \textsf{x} &  & \textsf{o} & \tabularnewline
\multicolumn{5}{c}{{\footnotesize{}(B)}}\tabularnewline
\end{tabular}\textsf{$\xrightarrow{o}$}%
\begin{tabular}{cc|c|cc}
 &  & \textsf{x} & \textsf{o} & \tabularnewline
\cline{2-4} 
 &  & \textsf{x} &  & \tabularnewline
\cline{2-4} 
 & \textsf{x} & \textsf{o} & \textsf{o} & \tabularnewline
\multicolumn{5}{c}{}\tabularnewline
\end{tabular}\textsf{$\xrightarrow{x}$}%
\begin{tabular}{cc|c|cc}
 &  & \textsf{x} & \textsf{o} & \tabularnewline
\cline{2-4} 
 &  &  & \textsf{x} & \tabularnewline
\cline{2-4} 
 & \textsf{x} & \textsf{o} & \textsf{o} & \tabularnewline
\multicolumn{5}{c}{}\tabularnewline
\end{tabular}\textsf{$\xrightarrow{o}$}%
\begin{tabular}{cc|c|cc}
 &  & \textsf{x} & \textsf{o} & \tabularnewline
\cline{2-4} 
 &  & \textsf{o} & \textsf{x} & \tabularnewline
\cline{2-4} 
 & \textsf{x} & \textsf{o} &  & \tabularnewline
\multicolumn{5}{c}{}\tabularnewline
\end{tabular}\textsf{$\xrightarrow{x}$}%
\begin{tabular}{cc|c|cc}
 &  & \textsf{x} & \textsf{o} & \tabularnewline
\cline{2-4} 
 &  & \textsf{o} &  & \tabularnewline
\cline{2-4} 
 & \textsf{x} & \textsf{o} & \textsf{x} & \tabularnewline
\multicolumn{5}{c}{}\tabularnewline
\end{tabular}\bigskip

from which player O has a non-losing strategy by Lemma \ref{lem:LemmaOloop}.
In case (C), we get \bigskip

\textsf{}%
\begin{tabular}{cc|c|cc}
 &  &  & \textsf{o} & \tabularnewline
\cline{2-4} 
 & \textsf{o} & \textsf{x} & \textsf{x} & \tabularnewline
\cline{2-4} 
 &  & \textsf{x} & \textsf{o} & \tabularnewline
\multicolumn{5}{c}{{\footnotesize{}(C)}}\tabularnewline
\end{tabular}\textsf{$\xrightarrow{o}$}%
\begin{tabular}{cc|c|cc}
 &  & \textsf{o} &  & \tabularnewline
\cline{2-4} 
 & \textsf{o} & \textsf{x} & \textsf{x} & \tabularnewline
\cline{2-4} 
 &  & \textsf{x} & \textsf{o} & \tabularnewline
\multicolumn{5}{c}{}\tabularnewline
\end{tabular}\textsf{$\xrightarrow{x}$}%
\begin{tabular}{cc|c|cc}
 & \textsf{x} & \textsf{o} &  & \tabularnewline
\cline{2-4} 
 & \textsf{o} &  & \textsf{x} & \tabularnewline
\cline{2-4} 
 &  & \textsf{x} & \textsf{o} & \tabularnewline
\multicolumn{5}{c}{}\tabularnewline
\end{tabular}\bigskip

\textsf{or} \bigskip

\textsf{}%
\begin{tabular}{cc|c|cc}
 &  &  & \textsf{o} & \tabularnewline
\cline{2-4} 
 & \textsf{o} & \textsf{x} & \textsf{x} & \tabularnewline
\cline{2-4} 
 &  & \textsf{x} & \textsf{o} & \tabularnewline
\multicolumn{5}{c}{{\footnotesize{}(C)}}\tabularnewline
\end{tabular}\textsf{$\xrightarrow{o}$}%
\begin{tabular}{cc|c|cc}
 &  & \textsf{o} &  & \tabularnewline
\cline{2-4} 
 & \textsf{o} & \textsf{x} & \textsf{x} & \tabularnewline
\cline{2-4} 
 &  & \textsf{x} & \textsf{o} & \tabularnewline
\multicolumn{5}{c}{}\tabularnewline
\end{tabular}\textsf{$\xrightarrow{x}$}%
\begin{tabular}{cc|c|cc}
 &  & \textsf{o} & \textsf{x} & \tabularnewline
\cline{2-4} 
 & \textsf{o} &  & \textsf{x} & \tabularnewline
\cline{2-4} 
 &  & \textsf{x} & \textsf{o} & \tabularnewline
\multicolumn{5}{c}{}\tabularnewline
\end{tabular} \bigskip

In the first case O creates a Zugzwang by moving into the center,
and in the second case, O can trap X.\bigskip\end{proof}
\begin{thm}
\label{thm:XLoop}Player X cannot win moving from a Loop position.\end{thm}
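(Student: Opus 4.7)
The plan is to prove Theorem \ref{thm:XLoop} by a short case analysis on X's behavior with respect to the center square, invoking the three preceding lemmas as black boxes. The key observation is that from a Loop position, X holds the center, so on each subsequent X-move either X keeps a stone on the center or X vacates it. This dichotomy directly matches the hypotheses of Lemmas \ref{lem:Xholdsthecenter} and \ref{lem:Xleavesthecenter}.

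First, I would set up the argument as follows. Fix any strategy for X starting from a Loop position. Consider the sequence of X's moves; at each such move either (a) X preserves the center occupation, or (b) X moves the central stone to some other cell. Suppose we are in case (a) forever. Then Lemma \ref{lem:Xholdsthecenter} supplies a response strategy for O that, after each pair of moves, returns the board to the (same) Loop position. Hence the position Loop is revisited, and by the rule that a revisited position is declared a draw (as recalled in the introduction), X does not win. Now suppose case (b) occurs at some move. Look at the first X-move at which X leaves the center. If this is X's very first move out of Loop, Lemma \ref{lem:Xleavesthecenter} (first part of its proof) directly shows X cannot win. Otherwise, X's earlier moves all held the center, and by Lemma \ref{lem:Xholdsthecenter} O can either (i) return to Loop before the critical move (in which case we are in the situation just analyzed, one Loop-cycle later), or (ii) the position reached just before X's critical move is one of the intermediate diagrams considered in Lemma \ref{lem:Xholdsthecenter}. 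The second part of Lemma \ref{lem:Xleavesthecenter} is tailored exactly to these intermediate cases and shows X cannot win from them either.

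Putting the two cases together, in every possible line of play starting from a Loop position with X to move, X cannot win: either play is eventually periodic through Loop (a draw), or X has reached a position covered by Lemma \ref{lem:Xleavesthecenter}. This establishes the theorem.

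The main obstacle I expect is conceptual rather than computational: one must be careful to argue that case (b) is truly covered by Lemma \ref{lem:Xleavesthecenter}, because that lemma has two parts, one for X leaving immediately and one for X leaving after an intermediate O-move from the sub-cases (A), (B), (C) of Lemma \ref{lem:Xholdsthecenter}. Aligning the labelings and checking that every line in which X eventually vacates the center is matched by one of these sub-cases is the delicate bookkeeping step. No new board analysis is needed; all the combinatorial work has already been carried out in the lemmas.
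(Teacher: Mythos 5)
Your proposal is correct and matches the paper's approach: the paper's proof of Theorem \ref{thm:XLoop} is simply the one-line observation that it follows from Lemmas \ref{lem:Xholdsthecenter} and \ref{lem:Xleavesthecenter}, and your case split on whether X holds or eventually vacates the center is exactly the intended combination of those two lemmas. The bookkeeping you flag (that a delayed departure from the center is caught by the second part of Lemma \ref{lem:Xleavesthecenter}, which treats the intermediate positions from Lemma \ref{lem:Xholdsthecenter}) is indeed how the paper arranges matters, so nothing is missing.
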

\begin{proof}
The proof follows from Lemmas \ref{lem:Xholdsthecenter} and \ref{lem:Xleavesthecenter}.
\end{proof}

\subsection{Games with two stones on the board}

There are three positions with exactly two stones modulo symmetries.
We begin by ruling out that player O gets to start in the center.\bigskip
\begin{lem}
\label{thm:OmiddleXcorner}For game\textsf{}%
\begin{tabular}{cc|c|cc}
 &  &  & \textsf{x} & \tabularnewline
\cline{2-4} 
 &  & \textsf{o} &  & \tabularnewline
\cline{2-4} 
 &  &  &  & \tabularnewline
\multicolumn{5}{c}{}\tabularnewline
\end{tabular}\textsf{ }X cannot win.\bigskip\end{lem}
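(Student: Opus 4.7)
The plan is to perform a case analysis on X's third placement move and drive each resulting line into a position covered by Theorem \ref{thm:XLoop}. Because O holds the centre and X sits in a corner, the position is invariant under reflection across the diagonal joining X's corner to the centre, so X's next placement has only four inequivalent choices modulo this symmetry: the opposite corner, one of the two adjacent corners, an edge square adjacent to X's corner, and an edge square on the far side of the board.

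For each of these four branches I would specify O's reply, being careful never to cede the centre. Whenever X threatens to complete two-in-a-row, O's reply is forced as a block; otherwise O plays to complete a symmetric formation that will reappear as a Loop position once the placement phase ends. I would then continue the bookkeeping through O's third placement, after which all six stones are on the board, and verify that the resulting configuration is either literally a Loop position with X to move, in which case Theorem \ref{thm:XLoop} closes the branch, or a position from which O can force a Loop in one additional slide by the mirror argument of Lemma \ref{lem:LemmaOloop}. Branches in which X walks into a Trap, loses a Race, or permits O an immediate three-in-a-row are pruned as not belonging to X's strategy, following the convention introduced just before Section 2.

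The main obstacle will be the branch in which X places its second stone on a corner \emph{adjacent} to its first, so that the two X-stones share an edge of the board. Here O's centre stone alone does not threaten an immediate block, and O must commit to a specific edge or corner placement that looks several plies ahead. Funnelling this branch cleanly into a Loop setup, rather than producing fresh sub-cases to re-analyse, is the delicate step; by contrast, the edge-placement branches for X are easier because O can almost always mirror X across the centre and reach the desired Loop geometry directly, while the opposite-corner branch quickly creates forced blocks that pin down the rest of the placement.
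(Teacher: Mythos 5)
Your skeleton matches the paper's exactly: modulo the reflection in the diagonal through X's corner and the centre, X's second placement has four inequivalent options (adjacent edge, adjacent corner, far edge, opposite corner), and the paper closes each branch by steering play either into a Loop position with X to move (Theorem~\ref{thm:XLoop}) or into the colour-reversed position of Lemma~\ref{lem:LemmaOloop}. So the decomposition and the target results are the right ones.

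The gap is that the skeleton is all you supply. The entire mathematical content of this lemma is the explicit exhibition of O's replies and the move-by-move check that X's alternatives are forced, losing, or funnelled into one of the two target positions; phrases such as ``I would specify O's reply'' and ``O plays to complete a symmetric formation'' are placeholders for precisely the part that has to be verified, and your guiding heuristics would not survive that verification. O does \emph{not} mirror X in the edge branches: in the far-edge case the paper has O occupy the edge adjacent to X's corner, manufacturing a two-in-a-row threat through the centre that forces X's last placement, and in the adjacent-edge case O takes the free corner on X's side of the board. Your prediction that the adjacent-corner branch is the delicate one is also off: there O's centre stone plus a single edge placement between the two X corners creates an immediate threat that forces X's hand, and the branch closes in a few forced moves; the genuinely delicate configuration is a different two-stone position, treated separately in Lemma~\ref{lem:delicate}. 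Until the four lines (together with the sub-branches where X retains a real choice after all six stones are down) are written out and each terminal diagram is checked to be a Loop or the Lemma~\ref{lem:LemmaOloop} position, the statement is not proved.
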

\begin{proof}
We have the following possibilities for player X\bigskip

\textsf{}%
\begin{tabular}{cc|c|cc}
 &  &  & \textsf{x} & \tabularnewline
\cline{2-4} 
 &  & \textsf{o} & \textsf{x} & \tabularnewline
\cline{2-4} 
 &  &  &  & \tabularnewline
\multicolumn{5}{c}{{\footnotesize{}(A)}}\tabularnewline
\end{tabular}\textsf{}%
\begin{tabular}{cc|c|cc}
 &  &  & \textsf{x} & \tabularnewline
\cline{2-4} 
 &  & \textsf{o} &  & \tabularnewline
\cline{2-4} 
 &  &  & \textsf{x} & \tabularnewline
\multicolumn{5}{c}{{\footnotesize{}(B)}}\tabularnewline
\end{tabular}\textsf{}%
\begin{tabular}{cc|c|cc}
 &  &  & \textsf{x} & \tabularnewline
\cline{2-4} 
 &  & \textsf{o} &  & \tabularnewline
\cline{2-4} 
 &  & \textsf{x} &  & \tabularnewline
\multicolumn{5}{c}{{\footnotesize{}(C)}}\tabularnewline
\end{tabular}\textsf{}%
\begin{tabular}{cc|c|cc}
 &  &  & \textsf{x} & \tabularnewline
\cline{2-4} 
 &  & \textsf{o} &  & \tabularnewline
\cline{2-4} 
 & \textsf{x} &  &  & \tabularnewline
\multicolumn{5}{c}{{\footnotesize{}(D)}}\tabularnewline
\end{tabular}\bigskip

For game (A), player O can force a Loop by\bigskip

\textsf{}%
\begin{tabular}{cc|c|cc}
 &  &  & \textsf{x} & \tabularnewline
\cline{2-4} 
 &  & \textsf{o} & \textsf{x} & \tabularnewline
\cline{2-4} 
 &  &  &  & \tabularnewline
\multicolumn{5}{c}{{\footnotesize{}(A)}}\tabularnewline
\end{tabular}\textsf{$\xrightarrow{o}$}%
\begin{tabular}{cc|c|cc}
 &  &  & \textsf{x} & \tabularnewline
\cline{2-4} 
 &  & \textsf{o} & \textsf{x} & \tabularnewline
\cline{2-4} 
 &  &  & \textsf{o} & \tabularnewline
\multicolumn{5}{c}{}\tabularnewline
\end{tabular}\textsf{$\xrightarrow{x}$}%
\begin{tabular}{cc|c|cc}
 & \textsf{x} &  & \textsf{x} & \tabularnewline
\cline{2-4} 
 &  & \textsf{o} & \textsf{x} & \tabularnewline
\cline{2-4} 
 &  &  & \textsf{o} & \tabularnewline
\multicolumn{5}{c}{}\tabularnewline
\end{tabular}\textsf{$\xrightarrow{o}$}%
\begin{tabular}{cc|c|cc}
 & \textsf{x} & \textsf{o} & \textsf{x} & \tabularnewline
\cline{2-4} 
 &  & \textsf{o} & \textsf{x} & \tabularnewline
\cline{2-4} 
 &  &  & \textsf{o} & \tabularnewline
\multicolumn{5}{c}{}\tabularnewline
\end{tabular}\textsf{$\xrightarrow{x}$}%
\begin{tabular}{cc|c|cc}
 & \textsf{x} & \textsf{o} & \textsf{x} & \tabularnewline
\cline{2-4} 
 &  & \textsf{o} &  & \tabularnewline
\cline{2-4} 
 &  & \textsf{x} & \textsf{o} & \tabularnewline
\multicolumn{5}{c}{}\tabularnewline
\end{tabular}\bigskip

For game (B), player X must respond\bigskip

\textsf{}%
\begin{tabular}{cc|c|cc}
 &  &  & \textsf{x} & \tabularnewline
\cline{2-4} 
 &  & \textsf{o} &  & \tabularnewline
\cline{2-4} 
 &  &  & \textsf{x} & \tabularnewline
\multicolumn{5}{c}{{\footnotesize{}(B)}}\tabularnewline
\end{tabular}\textsf{$\xrightarrow{o}$}%
\begin{tabular}{cc|c|cc}
 &  &  & \textsf{x} & \tabularnewline
\cline{2-4} 
 &  & \textsf{o} & \textsf{o} & \tabularnewline
\cline{2-4} 
 &  &  & \textsf{x} & \tabularnewline
\multicolumn{5}{c}{}\tabularnewline
\end{tabular}\textsf{$\xrightarrow{x}$}%
\begin{tabular}{cc|c|cc}
 &  &  & \textsf{x} & \tabularnewline
\cline{2-4} 
 & \textsf{x} & \textsf{o} & \textsf{o} & \tabularnewline
\cline{2-4} 
 &  &  & \textsf{x} & \tabularnewline
\multicolumn{5}{c}{}\tabularnewline
\end{tabular} \bigskip

and now player O can draw the game by\bigskip

\textsf{}%
\begin{tabular}{cc|c|cc}
 &  &  & \textsf{x} & \tabularnewline
\cline{2-4} 
 & \textsf{x} & \textsf{o} & \textsf{o} & \tabularnewline
\cline{2-4} 
 &  &  & \textsf{x} & \tabularnewline
\multicolumn{5}{c}{}\tabularnewline
\end{tabular}\textsf{$\xrightarrow{o}$}%
\begin{tabular}{cc|c|cc}
 &  &  & \textsf{x} & \tabularnewline
\cline{2-4} 
 & \textsf{x} & \textsf{o} & \textsf{o} & \tabularnewline
\cline{2-4} 
 &  & \textsf{o} & \textsf{x} & \tabularnewline
\multicolumn{5}{c}{{\footnotesize{}(E)}}\tabularnewline
\end{tabular}\textsf{$\xrightarrow{x}$}%
\begin{tabular}{cc|c|cc}
 &  & \textsf{x} &  & \tabularnewline
\cline{2-4} 
 & \textsf{x} & \textsf{o} & \textsf{o} & \tabularnewline
\cline{2-4} 
 &  & \textsf{o} & \textsf{x} & \tabularnewline
\multicolumn{5}{c}{}\tabularnewline
\end{tabular}\textsf{$\xrightarrow{o}$}%
\begin{tabular}{cc|c|cc}
 &  & \textsf{x} &  & \tabularnewline
\cline{2-4} 
 & \textsf{x} & \textsf{o} & \textsf{o} & \tabularnewline
\cline{2-4} 
 & \textsf{o} &  & \textsf{x} & \tabularnewline
\multicolumn{5}{c}{}\tabularnewline
\end{tabular}\bigskip

\begin{onehalfspace}
\noindent \textsf{$\xrightarrow{x}$}%
\begin{tabular}{cc|c|cc}
 &  &  & \textsf{x} & \tabularnewline
\cline{2-4} 
 & \textsf{x} & \textsf{o} & \textsf{o} & \tabularnewline
\cline{2-4} 
 & \textsf{o} &  & \textsf{x} & \tabularnewline
\multicolumn{5}{c}{}\tabularnewline
\end{tabular} which is a Loop.
\end{onehalfspace}

For game (C), if player O forces the sequence of positions \bigskip

\textsf{}%
\begin{tabular}{cc|c|cc}
 &  &  & \textsf{x} & \tabularnewline
\cline{2-4} 
 &  & \textsf{o} &  & \tabularnewline
\cline{2-4} 
 &  & \textsf{x} &  & \tabularnewline
\multicolumn{5}{c}{{\footnotesize{}(C)}}\tabularnewline
\end{tabular}\textsf{$\xrightarrow{o}$}%
\begin{tabular}{cc|c|cc}
 &  &  & \textsf{x} & \tabularnewline
\cline{2-4} 
 &  & \textsf{o} & \textsf{o} & \tabularnewline
\cline{2-4} 
 &  & \textsf{x} &  & \tabularnewline
\multicolumn{5}{c}{}\tabularnewline
\end{tabular}\textsf{$\xrightarrow{x}$}%
\begin{tabular}{cc|c|cc}
 &  &  & \textsf{x} & \tabularnewline
\cline{2-4} 
 & \textsf{x} & \textsf{o} & \textsf{o} & \tabularnewline
\cline{2-4} 
 &  & \textsf{x} &  & \tabularnewline
\multicolumn{5}{c}{}\tabularnewline
\end{tabular}\textsf{$\xrightarrow{o}$}%
\begin{tabular}{cc|c|cc}
 & \textsf{o} &  & \textsf{x} & \tabularnewline
\cline{2-4} 
 & \textsf{x} & \textsf{o} & \textsf{o} & \tabularnewline
\cline{2-4} 
 &  & \textsf{x} &  & \tabularnewline
\multicolumn{5}{c}{}\tabularnewline
\end{tabular}\textsf{$\xrightarrow{x}$}%
\begin{tabular}{cc|c|cc}
 & \textsf{o} &  & \textsf{x} & \tabularnewline
\cline{2-4} 
 & \textsf{x} & \textsf{o} & \textsf{o} & \tabularnewline
\cline{2-4} 
 &  &  & \textsf{x} & \tabularnewline
\multicolumn{5}{c}{}\tabularnewline
\end{tabular} \bigskip

then, by Lemma \ref{lem:LemmaOloop} $X$ cannot win. Similarly, for
game (D), if O forces the sequence \bigskip

\textsf{}%
\begin{tabular}{cc|c|cc}
 &  &  & \textsf{x} & \tabularnewline
\cline{2-4} 
 &  & \textsf{o} &  & \tabularnewline
\cline{2-4} 
 & \textsf{x} &  &  & \tabularnewline
\multicolumn{5}{c}{{\footnotesize{}(D)}}\tabularnewline
\end{tabular}\textsf{$\xrightarrow{o}$}%
\begin{tabular}{cc|c|cc}
 &  &  & \textsf{x} & \tabularnewline
\cline{2-4} 
 &  & \textsf{o} &  & \tabularnewline
\cline{2-4} 
 & \textsf{x} &  & \textsf{o} & \tabularnewline
\multicolumn{5}{c}{}\tabularnewline
\end{tabular}\textsf{$\xrightarrow{x}$}%
\begin{tabular}{cc|c|cc}
 & \textsf{x} &  & \textsf{x} & \tabularnewline
\cline{2-4} 
 &  & \textsf{o} &  & \tabularnewline
\cline{2-4} 
 & \textsf{x} &  & \textsf{o} & \tabularnewline
\multicolumn{5}{c}{}\tabularnewline
\end{tabular}\textsf{$\xrightarrow{o}$}%
\begin{tabular}{cc|c|cc}
 & \textsf{x} &  & \textsf{x} & \tabularnewline
\cline{2-4} 
 & \textsf{o} & \textsf{o} &  & \tabularnewline
\cline{2-4} 
 & \textsf{x} &  & \textsf{o} & \tabularnewline
\multicolumn{5}{c}{}\tabularnewline
\end{tabular}\textsf{$\xrightarrow{x}$}%
\begin{tabular}{cc|c|cc}
 & \textsf{x} &  &  & \tabularnewline
\cline{2-4} 
 & \textsf{o} & \textsf{o} & \textsf{x} & \tabularnewline
\cline{2-4} 
 & \textsf{x} &  & \textsf{o} & \tabularnewline
\multicolumn{5}{c}{}\tabularnewline
\end{tabular}\bigskip

Then by Lemma \ref{lem:LemmaOloop} player X cannot win.
\end{proof}
\bigskip
\begin{lem}
\label{lem:secondOcenter}For game\textsf{ }%
\begin{tabular}{cc|c|cc}
 &  &  &  & \tabularnewline
\cline{2-4} 
 &  & \textsf{o} & \textsf{x} & \tabularnewline
\cline{2-4} 
 &  &  &  & \tabularnewline
\multicolumn{5}{c}{}\tabularnewline
\end{tabular}\textsf{ }X cannot win.\end{lem}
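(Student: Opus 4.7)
My plan is to adapt the template of Lemma \ref{thm:OmiddleXcorner}: enumerate X's next placements up to the symmetry of the current two-stone configuration, and for each sub-case exhibit an O-strategy that forces either a Loop position or the precondition of Lemma \ref{lem:LemmaOloop} with O to move. Whichever target is reached, Theorem \ref{thm:XLoop} (invoking the latter lemma if needed) rules out an X-win and closes the case.

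The current position is invariant under reflection across the middle row, so X's third stone falls into one of four orbits on the seven remaining cells: a corner of the column containing X, a corner of the far column, a middle-edge cell of the top or bottom row, or the middle-left cell. For each of these four sub-positions I would prescribe an O-response---typically a placement either diagonally opposite to X's stones or along an as-yet unthreatened line---that either immediately threatens three-in-a-row (pinning X to a unique non-losing reply) or creates a rotation/reflection of the Loop diagram. Iterating at most one or two more such forcing exchanges should land the game in the scope of Lemma \ref{lem:LemmaOloop} or in a Loop position outright, after which the rest of the tree is absorbed by Theorem \ref{thm:XLoop}.

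The main obstacle is really completeness of the case analysis: at intermediate nodes X may have several replies, and I must argue that each one either loses outright (in which case it is excluded from X's strategy by the optimal-play convention explained in the introduction) or can be met by an O-move steering into one of the two target templates. I would present the argument in the same diagrammatic style as Lemma \ref{thm:OmiddleXcorner}, leveraging the horizontal reflection symmetry to collapse mirror-image continuations and citing Lemma \ref{lem:LemmaOloop} and Theorem \ref{thm:XLoop} at the end of each branch, so that the final bookkeeping reduces to checking four root sub-cases rather than the full X-subtree.
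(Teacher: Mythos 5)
Your framework coincides with the paper's: the two-stone position is invariant under reflection in the middle row, X's third placement falls into exactly the four orbits you list, and in every branch O's goal is to reach a Loop position or the position of Lemma \ref{lem:LemmaOloop} with O to move, after which Lemma \ref{lem:LemmaOloop} and Theorem \ref{thm:XLoop} absorb the rest of the tree. The paper even exploits a shortcut your outline does not notice: the two corner orbits (a corner of X's column and a corner of the far column) reproduce, up to a symmetry of the board, cases (A) and (C) already settled in the proof of Lemma \ref{thm:OmiddleXcorner}, so only the edge-cell orbits --- X on the top/bottom middle cell, and X on the middle-left cell --- require fresh forcing lines.

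The genuine gap is that you never exhibit those forcing lines, and for a statement of this kind the explicit move sequences, together with the check that every X reply at every intermediate node is covered, \emph{are} the proof. ``A placement either diagonally opposite to X's stones or along an as-yet unthreatened line'' is a search heuristic, not a verified strategy: it is not evident a priori that one or two exchanges suffice, that X never acquires a counter-threat along the way, or that the position actually lands in a symmetry image of the Lemma \ref{lem:LemmaOloop} diagram rather than merely resembling it. For instance, in the orbit where X takes the top-middle cell, the paper's line has O occupy the top-right corner (an anti-diagonal threat forcing X to the opposite corner), then the bottom-middle cell, and only after the final placements and one slide does a reflection of the Lemma \ref{lem:LemmaOloop} position appear; in the middle-left orbit the forcing runs through a different (column) threat and ends in yet another reflection of that position. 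None of this can be read off from the heuristic, and appeals to the optimal-play convention to discard ``losing'' X replies still require you to demonstrate that those replies lose. Until the concrete sequences are written down and each branch verified, the lemma is not proved.
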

\begin{proof}
Here player X has the possibilities \bigskip

\textsf{}%
\begin{tabular}{cc|c|cc}
 &  &  & \textsf{x} & \tabularnewline
\cline{2-4} 
 &  & \textsf{o} & \textsf{x} & \tabularnewline
\cline{2-4} 
 &  &  &  & \tabularnewline
\multicolumn{5}{c}{{\footnotesize{}(A)}}\tabularnewline
\end{tabular}\textsf{}%
\begin{tabular}{cc|c|cc}
 &  & \textsf{x} &  & \tabularnewline
\cline{2-4} 
 &  & \textsf{o} & \textsf{x} & \tabularnewline
\cline{2-4} 
 &  &  &  & \tabularnewline
\multicolumn{5}{c}{{\footnotesize{}(B)}}\tabularnewline
\end{tabular}\textsf{}%
\begin{tabular}{cc|c|cc}
 & \textsf{x} &  &  & \tabularnewline
\cline{2-4} 
 &  & \textsf{o} & \textsf{x} & \tabularnewline
\cline{2-4} 
 &  &  &  & \tabularnewline
\multicolumn{5}{c}{{\footnotesize{}(C)}}\tabularnewline
\end{tabular}\textsf{}%
\begin{tabular}{cc|c|cc}
 &  &  &  & \tabularnewline
\cline{2-4} 
 & \textsf{x} & \textsf{o} & \textsf{x} & \tabularnewline
\cline{2-4} 
 &  &  &  & \tabularnewline
\multicolumn{5}{c}{{\footnotesize{}(D)}}\tabularnewline
\end{tabular} \bigskip

Games (A) and (C) are the same games as (A) and (C) in the proof of
Theorem \ref{thm:OmiddleXcorner}. 

For game (B), player O can force X's moves by\bigskip

\textsf{}%
\begin{tabular}{cc|c|cc}
 &  & \textsf{x} &  & \tabularnewline
\cline{2-4} 
 &  & \textsf{o} & \textsf{x} & \tabularnewline
\cline{2-4} 
 &  &  &  & \tabularnewline
\multicolumn{5}{c}{{\footnotesize{}(B)}}\tabularnewline
\end{tabular}\textsf{$\xrightarrow{o}$}%
\begin{tabular}{cc|c|cc}
 &  & \textsf{x} & \textsf{o} & \tabularnewline
\cline{2-4} 
 &  & \textsf{o} & \textsf{x} & \tabularnewline
\cline{2-4} 
 &  &  &  & \tabularnewline
\multicolumn{5}{c}{}\tabularnewline
\end{tabular}\textsf{$\xrightarrow{x}$}%
\begin{tabular}{cc|c|cc}
 &  & \textsf{x} & \textsf{o} & \tabularnewline
\cline{2-4} 
 &  & \textsf{o} & \textsf{x} & \tabularnewline
\cline{2-4} 
 & \textsf{x} &  &  & \tabularnewline
\multicolumn{5}{c}{}\tabularnewline
\end{tabular}\textsf{$\xrightarrow{o}$}%
\begin{tabular}{cc|c|cc}
 &  & \textsf{x} & \textsf{o} & \tabularnewline
\cline{2-4} 
 &  & \textsf{o} & \textsf{x} & \tabularnewline
\cline{2-4} 
 & \textsf{x} & \textsf{o} &  & \tabularnewline
\multicolumn{5}{c}{}\tabularnewline
\end{tabular}\textsf{$\xrightarrow{x}$}%
\begin{tabular}{cc|c|cc}
 &  & \textsf{x} & \textsf{o} & \tabularnewline
\cline{2-4} 
 &  & \textsf{o} &  & \tabularnewline
\cline{2-4} 
 & \textsf{x} & \textsf{o} & \textsf{x} & \tabularnewline
\multicolumn{5}{c}{}\tabularnewline
\end{tabular}\bigskip\textsf{ }

Then by Lemma \ref{lem:LemmaOloop}, X cannot win. For game (D), player
O can force X's moves by\bigskip

\textsf{}%
\begin{tabular}{cc|c|cc}
 &  &  &  & \tabularnewline
\cline{2-4} 
 & \textsf{x} & \textsf{o} & \textsf{x} & \tabularnewline
\cline{2-4} 
 &  &  &  & \tabularnewline
\multicolumn{5}{c}{{\footnotesize{}(D)}}\tabularnewline
\end{tabular}\textsf{$\xrightarrow{o}$}%
\begin{tabular}{cc|c|cc}
 &  &  &  & \tabularnewline
\cline{2-4} 
 & \textsf{x} & \textsf{o} & \textsf{x} & \tabularnewline
\cline{2-4} 
 &  & \textsf{o} &  & \tabularnewline
\multicolumn{5}{c}{}\tabularnewline
\end{tabular}\textsf{$\xrightarrow{x}$}%
\begin{tabular}{cc|c|cc}
 &  & \textsf{x} &  & \tabularnewline
\cline{2-4} 
 & \textsf{x} & \textsf{o} & \textsf{x} & \tabularnewline
\cline{2-4} 
 &  & \textsf{o} &  & \tabularnewline
\multicolumn{5}{c}{}\tabularnewline
\end{tabular}\textsf{$\xrightarrow{o}$}%
\begin{tabular}{cc|c|cc}
 &  & \textsf{x} & \textsf{o} & \tabularnewline
\cline{2-4} 
 & \textsf{x} & \textsf{o} & \textsf{x} & \tabularnewline
\cline{2-4} 
 &  & \textsf{o} &  & \tabularnewline
\multicolumn{5}{c}{}\tabularnewline
\end{tabular}\bigskip

\textsf{$\xrightarrow{x}$}%
\begin{tabular}{cc|c|cc}
 &  & \textsf{x} & \textsf{o} & \tabularnewline
\cline{2-4} 
 &  & \textsf{o} & \textsf{x} & \tabularnewline
\cline{2-4} 
 & \textsf{x} & \textsf{o} &  & \tabularnewline
\multicolumn{5}{c}{}\tabularnewline
\end{tabular}\textsf{$\xrightarrow{o}$}%
\begin{tabular}{cc|c|cc}
 &  & \textsf{x} & \textsf{o} & \tabularnewline
\cline{2-4} 
 & \textsf{o} & \textsf{o} & \textsf{x} & \tabularnewline
\cline{2-4} 
 & \textsf{x} &  &  & \tabularnewline
\multicolumn{5}{c}{}\tabularnewline
\end{tabular}\textsf{$\xrightarrow{x}$}%
\begin{tabular}{cc|c|cc}
 & \textsf{x} &  & \textsf{o} & \tabularnewline
\cline{2-4} 
 & \textsf{o} & \textsf{o} & \textsf{x} & \tabularnewline
\cline{2-4} 
 & \textsf{x} &  &  & \tabularnewline
\multicolumn{5}{c}{}\tabularnewline
\end{tabular}\bigskip

Then, by Lemma \ref{lem:LemmaOloop}, player X cannot win.\bigskip
\end{proof}
The next result deals with the most delicate position. There are some
ideas that are very interesting, that did not appear so far.
\begin{lem}
\label{lem:delicate}For game\textsf{ }%
\begin{tabular}{cc|c|cc}
 &  &  & \textsf{o} & \tabularnewline
\cline{2-4} 
 &  & \textsf{x} &  & \tabularnewline
\cline{2-4} 
 &  &  &  & \tabularnewline
\multicolumn{5}{c}{}\tabularnewline
\end{tabular}\textsf{ }X cannot win.\end{lem}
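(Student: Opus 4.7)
The plan is to enumerate, up to the antidiagonal symmetry of the current two-stone configuration, every square in which $X$ can place its second stone, and for each class of placements exhibit a response by $O$ leading to a position already known to be non-winning for $X$. The reflection through the antidiagonal joining $O$'s corner to the opposite corner fixes the board, so $X$'s seven legal placements partition into four equivalence classes: the two corners sharing a row or column with $O$, the two edge squares king-adjacent to $O$, the two remaining edge squares, and the corner opposite $O$. I would deal with the four classes in turn.

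For each class I would trace a short line of play in which $O$'s replies create immediate three-in-a-row threats that constrain $X$, with the goal that the placement phase ends in a six-stone board which is either, up to rotation and reflection, a Loop position (in which case Theorem~\ref{thm:XLoop} finishes the case), or the position of Lemma~\ref{lem:LemmaOloop} with $O$ to move (in which case $O$ can force a Loop in one move, and Theorem~\ref{thm:XLoop} again applies). When $X$'s placement does not itself threaten a row, column, or diagonal, $O$'s reply should simultaneously build a threat of its own and avoid conceding a Fork or Trap to $X$; I would choose $O$'s placements with the target positions of Lemma~\ref{lem:LemmaOloop} already in mind, so that the forcing direction is clear.

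The most delicate case should be the one where $X$ places at the corner opposite $O$, since then the antidiagonal is blocked by two $X$-stones and $O$ cannot use it to generate a forcing threat. In that branch $O$ must instead play an edge cell that creates parallel threats along a row and a column, and the forced sequel is longer: one has to verify that, regardless of which threat $X$ defends and how $O$ completes its placement, the resulting six-stone board reduces, up to symmetry, to the hypothesis of Lemma~\ref{lem:LemmaOloop} or directly to a Loop, and that no hidden Fork, Trap, or Race for $X$ appears along the way. Ruling out these shortcut-threats throughout that long branch is where the main verification effort will go.
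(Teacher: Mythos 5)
Your symmetry decomposition is exactly the paper's: the four classes you list correspond to the paper's cases (B) (a corner sharing a line with O), (A) (an edge square adjacent to O), (C) (a far edge square) and (D) (the corner opposite O), and your intended terminals --- a Loop, handled by Theorem~\ref{thm:XLoop}, or the position of Lemma~\ref{lem:LemmaOloop} with O to move --- are also the paper's. But as written this is a plan rather than a proof: none of the move sequences that constitute the actual argument are exhibited, and the one concrete structural prediction you commit to is wrong. The placement at the corner opposite O is not the delicate case. There O simply answers with a second corner in line with its first stone, threatening to complete that line immediately; X's third placement is forced to the blocking edge square, and O's own third stone then completes a Loop. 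The whole branch is a forced three-ply line, and O never needs the blocked antidiagonal.

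The genuinely hard class is the edge square adjacent to O. There X's placement itself threatens the middle line through the centre, so O's block on the opposite edge square is forced, and X's third stone then has five essentially distinct destinations with no symmetry left to exploit. Several of these cannot be steered to a Loop or to the position of Lemma~\ref{lem:LemmaOloop} at all: the paper refutes them by showing that O wins outright (by a Zugzwang, a Trap, or an unblockable threat), and it is precisely these O-wins that prune X's options down to the branches that do reduce to the loop machinery. So your stated invariant --- that every branch of the placement phase ends in a Loop or in the hypothesis of Lemma~\ref{lem:LemmaOloop} --- is not the right one, and the verification effort you budget for the opposite-corner branch is needed instead in the adjacent-edge branch and its further ramifications. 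Until those five sub-cases are written out and checked move by move, the lemma is not proved.
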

\begin{proof}
We have the cases to check\bigskip

\textsf{}%
\begin{tabular}{cc|c|cc}
 &  &  & \textsf{o} & \tabularnewline
\cline{2-4} 
 &  & \textsf{x} & \textsf{x} & \tabularnewline
\cline{2-4} 
 &  &  &  & \tabularnewline
\multicolumn{5}{c}{{\footnotesize{}(A)}}\tabularnewline
\end{tabular}\textsf{}%
\begin{tabular}{cc|c|cc}
 &  &  & \textsf{o} & \tabularnewline
\cline{2-4} 
 &  & \textsf{x} &  & \tabularnewline
\cline{2-4} 
 &  &  & \textsf{x} & \tabularnewline
\multicolumn{5}{c}{{\footnotesize{}(B)}}\tabularnewline
\end{tabular}\textsf{}%
\begin{tabular}{cc|c|cc}
 &  &  & \textsf{o} & \tabularnewline
\cline{2-4} 
 &  & \textsf{x} &  & \tabularnewline
\cline{2-4} 
 &  & \textsf{x} &  & \tabularnewline
\multicolumn{5}{c}{{\footnotesize{}(C)}}\tabularnewline
\end{tabular}\textsf{}%
\begin{tabular}{cc|c|cc}
 &  &  & \textsf{o} & \tabularnewline
\cline{2-4} 
 &  & \textsf{x} &  & \tabularnewline
\cline{2-4} 
 & \textsf{x} &  &  & \tabularnewline
\multicolumn{5}{c}{{\footnotesize{}(D)}}\tabularnewline
\end{tabular}\bigskip

For position (A), \bigskip

\textsf{}%
\begin{tabular}{cc|c|cc}
 &  &  & \textsf{o} & \tabularnewline
\cline{2-4} 
 &  & \textsf{x} & \textsf{x} & \tabularnewline
\cline{2-4} 
 &  &  &  & \tabularnewline
\multicolumn{5}{c}{{\footnotesize{}(A)}}\tabularnewline
\end{tabular}\textsf{$\xrightarrow{o}$}%
\begin{tabular}{cc|c|cc}
 &  &  & \textsf{o} & \tabularnewline
\cline{2-4} 
 & \textsf{o} & \textsf{x} & \textsf{x} & \tabularnewline
\cline{2-4} 
 &  &  &  & \tabularnewline
\multicolumn{5}{c}{}\tabularnewline
\end{tabular}and X has the possibilities \bigskip

\textsf{}%
\begin{tabular}{cc|c|cc}
 &  &  & \textsf{o} & \tabularnewline
\cline{2-4} 
 & \textsf{o} & \textsf{x} & \textsf{x} & \tabularnewline
\cline{2-4} 
 &  &  & \textsf{x} & \tabularnewline
\multicolumn{5}{c}{\textsf{\footnotesize{}(1)}}\tabularnewline
\end{tabular}\textsf{}%
\begin{tabular}{cc|c|cc}
 &  &  & \textsf{o} & \tabularnewline
\cline{2-4} 
 & \textsf{o} & \textsf{x} & \textsf{x} & \tabularnewline
\cline{2-4} 
 &  & \textsf{x} &  & \tabularnewline
\multicolumn{5}{c}{\textsf{\footnotesize{}(2)}}\tabularnewline
\end{tabular}\textsf{}%
\begin{tabular}{cc|c|cc}
 &  &  & \textsf{o} & \tabularnewline
\cline{2-4} 
 & \textsf{o} & \textsf{x} & \textsf{x} & \tabularnewline
\cline{2-4} 
 & \textsf{x} &  &  & \tabularnewline
\multicolumn{5}{c}{\textsf{\footnotesize{}(3)}}\tabularnewline
\end{tabular}\textsf{}%
\begin{tabular}{cc|c|cc}
 & \textsf{x} &  & \textsf{o} & \tabularnewline
\cline{2-4} 
 & \textsf{o} & \textsf{x} & \textsf{x} & \tabularnewline
\cline{2-4} 
 &  &  &  & \tabularnewline
\multicolumn{5}{c}{\textsf{\footnotesize{}(4)}}\tabularnewline
\end{tabular}\textsf{}%
\begin{tabular}{cc|c|cc}
 &  & \textsf{x} & \textsf{o} & \tabularnewline
\cline{2-4} 
 & \textsf{o} & \textsf{x} & \textsf{x} & \tabularnewline
\cline{2-4} 
 &  &  &  & \tabularnewline
\multicolumn{5}{c}{\textsf{\footnotesize{}(5)}}\tabularnewline
\end{tabular}\bigskip

For position 1,\bigskip

\textsf{}%
\begin{tabular}{cc|c|cc}
 &  &  & \textsf{o} & \tabularnewline
\cline{2-4} 
 & \textsf{o} & \textsf{x} & \textsf{x} & \tabularnewline
\cline{2-4} 
 &  &  & \textsf{x} & \tabularnewline
\multicolumn{5}{c}{\textsf{\footnotesize{}(1)}}\tabularnewline
\end{tabular}\textsf{$\xrightarrow{o}$}%
\begin{tabular}{cc|c|cc}
 & \textsf{o} &  & \textsf{o} & \tabularnewline
\cline{2-4} 
 & \textsf{o} & \textsf{x} & \textsf{x} & \tabularnewline
\cline{2-4} 
 &  &  & \textsf{x} & \tabularnewline
\multicolumn{5}{c}{}\tabularnewline
\end{tabular}\bigskip

Now if \bigskip

\textsf{}%
\begin{tabular}{cc|c|cc}
 & \textsf{o} &  & \textsf{o} & \tabularnewline
\cline{2-4} 
 & \textsf{o} & \textsf{x} & \textsf{x} & \tabularnewline
\cline{2-4} 
 &  &  & \textsf{x} & \tabularnewline
\multicolumn{5}{c}{}\tabularnewline
\end{tabular}\textsf{$\xrightarrow{x}$}%
\begin{tabular}{cc|c|cc}
 & \textsf{o} & \textsf{x} & \textsf{o} & \tabularnewline
\cline{2-4} 
 & \textsf{o} &  & \textsf{x} & \tabularnewline
\cline{2-4} 
 &  &  & \textsf{x} & \tabularnewline
\multicolumn{5}{c}{}\tabularnewline
\end{tabular}\textsf{$\xrightarrow{o}$}%
\begin{tabular}{cc|c|cc}
 &  & \textsf{x} & \textsf{o} & \tabularnewline
\cline{2-4} 
 & \textsf{o} & \textsf{o} & \textsf{x} & \tabularnewline
\cline{2-4} 
 &  &  & \textsf{x} & \tabularnewline
\multicolumn{5}{c}{}\tabularnewline
\end{tabular}\textsf{ }and X cannot avoid a loss,\bigskip

and if\bigskip

\textsf{}%
\begin{tabular}{cc|c|cc}
 & \textsf{o} &  & \textsf{o} & \tabularnewline
\cline{2-4} 
 & \textsf{o} & \textsf{x} & \textsf{x} & \tabularnewline
\cline{2-4} 
 &  &  & \textsf{x} & \tabularnewline
\multicolumn{5}{c}{}\tabularnewline
\end{tabular}\textsf{$\xrightarrow{x}$}%
\begin{tabular}{cc|c|cc}
 & \textsf{o} & \textsf{x} & \textsf{o} & \tabularnewline
\cline{2-4} 
 & \textsf{o} & \textsf{x} &  & \tabularnewline
\cline{2-4} 
 &  &  & \textsf{x} & \tabularnewline
\multicolumn{5}{c}{}\tabularnewline
\end{tabular}\textsf{$\xrightarrow{o}$}%
\begin{tabular}{cc|c|cc}
 & \textsf{o} & \textsf{x} & \textsf{o} & \tabularnewline
\cline{2-4} 
 &  & \textsf{x} &  & \tabularnewline
\cline{2-4} 
 &  & \textsf{o} & \textsf{x} & \tabularnewline
\multicolumn{5}{c}{}\tabularnewline
\end{tabular}\textsf{ }player O creates a Loop.\bigskip

For position 2, \bigskip

\textsf{}%
\begin{tabular}{cc|c|cc}
 &  &  & \textsf{o} & \tabularnewline
\cline{2-4} 
 & \textsf{o} & \textsf{x} & \textsf{x} & \tabularnewline
\cline{2-4} 
 &  & \textsf{x} &  & \tabularnewline
\multicolumn{5}{c}{\textsf{\footnotesize{}(2)}}\tabularnewline
\end{tabular}\textsf{$\xrightarrow{o}$}%
\begin{tabular}{cc|c|cc}
 &  & \textsf{o} & \textsf{o} & \tabularnewline
\cline{2-4} 
 & \textsf{o} & \textsf{x} & \textsf{x} & \tabularnewline
\cline{2-4} 
 &  & \textsf{x} &  & \tabularnewline
\multicolumn{5}{c}{}\tabularnewline
\end{tabular}\textsf{$\xrightarrow{x}$}%
\begin{tabular}{cc|c|cc}
 & \textsf{x} & \textsf{o} & \textsf{o} & \tabularnewline
\cline{2-4} 
 & \textsf{o} &  & \textsf{x} & \tabularnewline
\cline{2-4} 
 &  & \textsf{x} &  & \tabularnewline
\multicolumn{5}{c}{}\tabularnewline
\end{tabular}\textsf{$\xrightarrow{o}$}%
\begin{tabular}{cc|c|cc}
 & \textsf{x} & \textsf{o} &  & \tabularnewline
\cline{2-4} 
 & \textsf{o} & \textsf{o} & \textsf{x} & \tabularnewline
\cline{2-4} 
 &  & \textsf{x} &  & \tabularnewline
\multicolumn{5}{c}{}\tabularnewline
\end{tabular}\textsf{ }\bigskip

which is a zugzwang, so no matter what player X does, it loses the
game. For position 3, player O creates a Loop by\bigskip

\textsf{}%
\begin{tabular}{cc|c|cc}
 &  &  & \textsf{o} & \tabularnewline
\cline{2-4} 
 & \textsf{o} & \textsf{x} & \textsf{x} & \tabularnewline
\cline{2-4} 
 & \textsf{x} &  &  & \tabularnewline
\multicolumn{5}{c}{\textsf{\footnotesize{}(3)}}\tabularnewline
\end{tabular}\textsf{$\xrightarrow{o}$}%
\begin{tabular}{cc|c|cc}
 &  &  & \textsf{o} & \tabularnewline
\cline{2-4} 
 & \textsf{o} & \textsf{x} & \textsf{x} & \tabularnewline
\cline{2-4} 
 & \textsf{x} &  & \textsf{o} & \tabularnewline
\multicolumn{5}{c}{}\tabularnewline
\end{tabular}\bigskip

Position 4 forces player O to create a Loop by\bigskip

\textsf{}%
\begin{tabular}{cc|c|cc}
 & \textsf{x} &  & \textsf{o} & \tabularnewline
\cline{2-4} 
 & \textsf{o} & \textsf{x} & \textsf{x} & \tabularnewline
\cline{2-4} 
 &  &  &  & \tabularnewline
\multicolumn{5}{c}{\textsf{\footnotesize{}(4)}}\tabularnewline
\end{tabular}\textsf{$\xrightarrow{o}$}%
\begin{tabular}{cc|c|cc}
 & \textsf{x} &  & \textsf{o} & \tabularnewline
\cline{2-4} 
 & \textsf{o} & \textsf{x} & \textsf{x} & \tabularnewline
\cline{2-4} 
 &  &  & \textsf{o} & \tabularnewline
\multicolumn{5}{c}{}\tabularnewline
\end{tabular}\bigskip

For position (5) \bigskip

\textsf{}%
\begin{tabular}{cc|c|cc}
 &  & \textsf{x} & \textsf{o} & \tabularnewline
\cline{2-4} 
 & \textsf{o} & \textsf{x} & \textsf{x} & \tabularnewline
\cline{2-4} 
 &  &  &  & \tabularnewline
\multicolumn{5}{c}{\textsf{\footnotesize{}(5)}}\tabularnewline
\end{tabular}\textsf{$\xrightarrow{o}$}%
\begin{tabular}{cc|c|cc}
 &  & \textsf{x} & \textsf{o} & \tabularnewline
\cline{2-4} 
 & \textsf{o} & \textsf{x} & \textsf{x} & \tabularnewline
\cline{2-4} 
 &  & \textsf{o} &  & \tabularnewline
\multicolumn{5}{c}{}\tabularnewline
\end{tabular}\textsf{ }\bigskip

and now player X has 3 options, disregarding symmetries\bigskip

\textsf{}%
\begin{tabular}{cc|c|cc}
 &  & \textsf{x} & \textsf{o} & \tabularnewline
\cline{2-4} 
 & \textsf{o} & \textsf{x} &  & \tabularnewline
\cline{2-4} 
 &  & \textsf{o} & \textsf{x} & \tabularnewline
\multicolumn{5}{c}{\textsf{\footnotesize{}(i)}}\tabularnewline
\end{tabular}\textsf{ }%
\begin{tabular}{cc|c|cc}
 &  & \textsf{x} & \textsf{o} & \tabularnewline
\cline{2-4} 
 & \textsf{o} &  & \textsf{x} & \tabularnewline
\cline{2-4} 
 &  & \textsf{o} & \textsf{x} & \tabularnewline
\multicolumn{5}{c}{\textsf{\footnotesize{}(ii)}}\tabularnewline
\end{tabular}\textsf{ }%
\begin{tabular}{cc|c|cc}
 &  & \textsf{x} & \textsf{o} & \tabularnewline
\cline{2-4} 
 & \textsf{o} &  & \textsf{x} & \tabularnewline
\cline{2-4} 
 & \textsf{x} & \textsf{o} &  & \tabularnewline
\multicolumn{5}{c}{\textsf{\footnotesize{}(iii)}}\tabularnewline
\end{tabular}\bigskip

For (i), player O is forced to create a Loop by\bigskip

\textsf{}%
\begin{tabular}{cc|c|cc}
 &  & \textsf{x} & \textsf{o} & \tabularnewline
\cline{2-4} 
 & \textsf{o} & \textsf{x} &  & \tabularnewline
\cline{2-4} 
 &  & \textsf{o} & \textsf{x} & \tabularnewline
\multicolumn{5}{c}{\textsf{\footnotesize{}(i)}}\tabularnewline
\end{tabular}\textsf{$\xrightarrow{o}$}%
\begin{tabular}{cc|c|cc}
 & \textsf{o} & \textsf{x} & \textsf{o} & \tabularnewline
\cline{2-4} 
 &  & \textsf{x} &  & \tabularnewline
\cline{2-4} 
 &  & \textsf{o} & \textsf{x} & \tabularnewline
\multicolumn{5}{c}{}\tabularnewline
\end{tabular}\bigskip

For (ii), player X is trapped by \bigskip

\textsf{}%
\begin{tabular}{cc|c|cc}
 &  & \textsf{x} & \textsf{o} & \tabularnewline
\cline{2-4} 
 & \textsf{o} &  & \textsf{x} & \tabularnewline
\cline{2-4} 
 &  & \textsf{o} & \textsf{x} & \tabularnewline
\multicolumn{5}{c}{\textsf{\footnotesize{}(ii)}}\tabularnewline
\end{tabular}\textsf{$\xrightarrow{o}$}%
\begin{tabular}{cc|c|cc}
 &  & \textsf{x} & \textsf{o} & \tabularnewline
\cline{2-4} 
 &  & \textsf{o} & \textsf{x} & \tabularnewline
\cline{2-4} 
 &  & \textsf{o} & \textsf{x} & \tabularnewline
\multicolumn{5}{c}{}\tabularnewline
\end{tabular}\textsf{$\xrightarrow{x}$}%
\begin{tabular}{cc|c|cc}
 & \textsf{x} &  & \textsf{o} & \tabularnewline
\cline{2-4} 
 &  & \textsf{o} & \textsf{x} & \tabularnewline
\cline{2-4} 
 &  & \textsf{o} & \textsf{x} & \tabularnewline
\multicolumn{5}{c}{}\tabularnewline
\end{tabular}\textsf{$\xrightarrow{o}$}%
\begin{tabular}{cc|c|cc}
 & \textsf{x} & \textsf{o} &  & \tabularnewline
\cline{2-4} 
 &  & \textsf{o} & \textsf{x} & \tabularnewline
\cline{2-4} 
 &  & \textsf{o} & \textsf{x} & \tabularnewline
\multicolumn{5}{c}{}\tabularnewline
\end{tabular}\bigskip

So X does not create game (ii). Finally, game (iii) allows player
O to win by a Zugzwang \bigskip

\textsf{}%
\begin{tabular}{cc|c|cc}
 &  & \textsf{x} & \textsf{o} & \tabularnewline
\cline{2-4} 
 & \textsf{o} &  & \textsf{x} & \tabularnewline
\cline{2-4} 
 & \textsf{x} & \textsf{o} &  & \tabularnewline
\multicolumn{5}{c}{\textsf{\footnotesize{}(iii)}}\tabularnewline
\end{tabular}\textsf{ $\xrightarrow{o}$}%
\begin{tabular}{cc|c|cc}
 &  & \textsf{x} &  & \tabularnewline
\cline{2-4} 
 & \textsf{o} & \textsf{o} & \textsf{x} & \tabularnewline
\cline{2-4} 
 & \textsf{x} & \textsf{o} &  & \tabularnewline
\multicolumn{5}{c}{}\tabularnewline
\end{tabular} \bigskip

Thus player X does not create game (iii). That finishes game (A).
For game (B), all moves are forced, which creates a Loop as follows
\bigskip

\textsf{}%
\begin{tabular}{cc|c|cc}
 &  &  & \textsf{o} & \tabularnewline
\cline{2-4} 
 &  & \textsf{x} &  & \tabularnewline
\cline{2-4} 
 &  &  & \textsf{x} & \tabularnewline
\multicolumn{5}{c}{{\footnotesize{}(B)}}\tabularnewline
\end{tabular}\textsf{$\xrightarrow{o}$}%
\begin{tabular}{cc|c|cc}
 & \textsf{o} &  & \textsf{o} & \tabularnewline
\cline{2-4} 
 &  & \textsf{x} &  & \tabularnewline
\cline{2-4} 
 &  &  & \textsf{x} & \tabularnewline
\multicolumn{5}{c}{}\tabularnewline
\end{tabular}\textsf{$\xrightarrow{x}$}%
\begin{tabular}{cc|c|cc}
 & \textsf{o} & \textsf{x} & \textsf{o} & \tabularnewline
\cline{2-4} 
 &  & \textsf{x} &  & \tabularnewline
\cline{2-4} 
 &  &  & \textsf{x} & \tabularnewline
\multicolumn{5}{c}{}\tabularnewline
\end{tabular}\textsf{$\xrightarrow{o}$}%
\begin{tabular}{cc|c|cc}
 & \textsf{o} & \textsf{x} & \textsf{o} & \tabularnewline
\cline{2-4} 
 &  & \textsf{x} &  & \tabularnewline
\cline{2-4} 
 &  & \textsf{o} & \textsf{x} & \tabularnewline
\multicolumn{5}{c}{}\tabularnewline
\end{tabular}\bigskip

For game (C), there are the following forced moves\bigskip

\textsf{}%
\begin{tabular}{cc|c|cc}
 &  &  & \textsf{o} & \tabularnewline
\cline{2-4} 
 &  & \textsf{x} &  & \tabularnewline
\cline{2-4} 
 &  & \textsf{x} &  & \tabularnewline
\multicolumn{5}{c}{{\footnotesize{}(C)}}\tabularnewline
\end{tabular}\textsf{$\xrightarrow{o}$}%
\begin{tabular}{cc|c|cc}
 &  & \textsf{o} & \textsf{o} & \tabularnewline
\cline{2-4} 
 &  & \textsf{x} &  & \tabularnewline
\cline{2-4} 
 &  & \textsf{x} &  & \tabularnewline
\multicolumn{5}{c}{}\tabularnewline
\end{tabular}\textsf{$\xrightarrow{x}$}%
\begin{tabular}{cc|c|cc}
 & \textsf{x} & \textsf{o} & \textsf{o} & \tabularnewline
\cline{2-4} 
 &  & \textsf{x} &  & \tabularnewline
\cline{2-4} 
 &  & \textsf{x} &  & \tabularnewline
\multicolumn{5}{c}{}\tabularnewline
\end{tabular}\textsf{$\xrightarrow{o}$}%
\begin{tabular}{cc|c|cc}
 & \textsf{x} & \textsf{o} & \textsf{o} & \tabularnewline
\cline{2-4} 
 &  & \textsf{x} &  & \tabularnewline
\cline{2-4} 
 &  & \textsf{x} & \textsf{o} & \tabularnewline
\multicolumn{5}{c}{}\tabularnewline
\end{tabular}\bigskip

Now player X must choose one of the options\bigskip

\textsf{}%
\begin{tabular}{cc|c|cc}
 & \textsf{x} & \textsf{o} & \textsf{o} & \tabularnewline
\cline{2-4} 
 &  &  & \textsf{x} & \tabularnewline
\cline{2-4} 
 &  & \textsf{x} & \textsf{o} & \tabularnewline
\multicolumn{5}{c}{\textsf{\footnotesize{}(i)}}\tabularnewline
\end{tabular}\textsf{}%
\begin{tabular}{cc|c|cc}
 & \textsf{x} & \textsf{o} & \textsf{o} & \tabularnewline
\cline{2-4} 
 &  & \textsf{x} & \textsf{x} & \tabularnewline
\cline{2-4} 
 &  &  & \textsf{o} & \tabularnewline
\multicolumn{5}{c}{\textsf{\footnotesize{}(ii)}}\tabularnewline
\end{tabular}\bigskip

For game (i), player O forces a Loop by\bigskip

\textsf{}%
\begin{tabular}{cc|c|cc}
 & \textsf{x} & \textsf{o} & \textsf{o} & \tabularnewline
\cline{2-4} 
 &  &  & \textsf{x} & \tabularnewline
\cline{2-4} 
 &  & \textsf{x} & \textsf{o} & \tabularnewline
\multicolumn{5}{c}{\textsf{\footnotesize{}(i)}}\tabularnewline
\end{tabular}\textsf{$\xrightarrow{o}$}%
\begin{tabular}{cc|c|cc}
 & \textsf{x} & \textsf{o} &  & \tabularnewline
\cline{2-4} 
 &  & \textsf{o} & \textsf{x} & \tabularnewline
\cline{2-4} 
 &  & \textsf{x} & \textsf{o} & \tabularnewline
\multicolumn{5}{c}{}\tabularnewline
\end{tabular}\textsf{$\xrightarrow{x}$}%
\begin{tabular}{cc|c|cc}
 & \textsf{x} & \textsf{o} & \textsf{x} & \tabularnewline
\cline{2-4} 
 &  & \textsf{o} &  & \tabularnewline
\cline{2-4} 
 &  & \textsf{x} & \textsf{o} & \tabularnewline
\multicolumn{5}{c}{}\tabularnewline
\end{tabular}\bigskip

For game (ii) player O creates a Loop by\bigskip

\textsf{}%
\begin{tabular}{cc|c|cc}
 & \textsf{x} & \textsf{o} & \textsf{o} & \tabularnewline
\cline{2-4} 
 &  & \textsf{x} & \textsf{x} & \tabularnewline
\cline{2-4} 
 &  &  & \textsf{o} & \tabularnewline
\multicolumn{5}{c}{\textsf{\footnotesize{}(ii)}}\tabularnewline
\end{tabular}\textsf{$\xrightarrow{o}$}%
\begin{tabular}{cc|c|cc}
 & \textsf{x} &  & \textsf{o} & \tabularnewline
\cline{2-4} 
 & \textsf{o} & \textsf{x} & \textsf{x} & \tabularnewline
\cline{2-4} 
 &  &  & \textsf{o} & \tabularnewline
\multicolumn{5}{c}{}\tabularnewline
\end{tabular} which finishes game (C).\bigskip

For game (D), player O forces a Loop by\bigskip

\textsf{}%
\begin{tabular}{cc|c|cc}
 &  &  & \textsf{o} & \tabularnewline
\cline{2-4} 
 &  & \textsf{x} &  & \tabularnewline
\cline{2-4} 
 & \textsf{x} &  &  & \tabularnewline
\multicolumn{5}{c}{{\footnotesize{}(D)}}\tabularnewline
\end{tabular}\textsf{$\xrightarrow{o}$}%
\begin{tabular}{cc|c|cc}
 &  &  & \textsf{o} & \tabularnewline
\cline{2-4} 
 &  & \textsf{x} &  & \tabularnewline
\cline{2-4} 
 & \textsf{x} &  & \textsf{o} & \tabularnewline
\multicolumn{5}{c}{}\tabularnewline
\end{tabular}\textsf{$\xrightarrow{x}$}%
\begin{tabular}{cc|c|cc}
 &  &  & \textsf{o} & \tabularnewline
\cline{2-4} 
 &  & \textsf{x} & \textsf{x} & \tabularnewline
\cline{2-4} 
 & \textsf{x} &  & \textsf{o} & \tabularnewline
\multicolumn{5}{c}{}\tabularnewline
\end{tabular}\textsf{$\xrightarrow{o}$}%
\begin{tabular}{cc|c|cc}
 &  &  & \textsf{o} & \tabularnewline
\cline{2-4} 
 & \textsf{o} & \textsf{x} & \textsf{x} & \tabularnewline
\cline{2-4} 
 & \textsf{x} &  & \textsf{o} & \tabularnewline
\multicolumn{5}{c}{}\tabularnewline
\end{tabular} \bigskip

which concludes the proof.
\end{proof}

\subsection{Summing up the results}
\begin{thm}
\label{thm:Player-X-cannot}Player X cannot win.\end{thm}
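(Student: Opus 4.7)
The plan is to reduce Theorem \ref{thm:Player-X-cannot} to the three two-stone lemmas of the previous subsection by a case analysis on X's opening move. Since X plays first, the game state after two moves is determined by X's opening placement together with O's chosen response. Modulo the dihedral symmetries of the $3\times 3$ board, the squares of the grid fall into exactly three orbits: the center, the four corners, and the four edge midpoints. Thus, up to symmetry, X has only three inequivalent opening moves to consider.

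For each case I would prescribe a specific response for O and invoke the corresponding lemma. If X opens in the center, O plays in a corner, landing precisely in the configuration handled by Lemma \ref{lem:delicate}. If X opens in a corner, O seizes the center, producing the configuration of Lemma \ref{thm:OmiddleXcorner}. If X opens on an edge, O again seizes the center, giving the configuration of Lemma \ref{lem:secondOcenter}. In all three cases the cited lemma provides a strategy for O under which X cannot complete three in a row.

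I do not anticipate any technical obstacle here: the orbit count on the grid is elementary, and the three lemmas have already absorbed all the heavy combinatorial work of analyzing forced sequences, Loops, Traps, and Zugzwang positions. The only mild point worth noting is that O's strategy must be specified \emph{before} knowing X's opening, but since O sees X's move before replying, this amounts to a look-up table with only three entries, one per orbit of first moves. Thus the proof reduces to a bookkeeping step that repackages the preceding results; combined with Theorem \ref{lem:A} it would immediately yield the desired conclusion that Picaria is a draw from the empty board.
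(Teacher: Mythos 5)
Your proposal is correct and matches the paper's proof, which likewise derives the theorem by combining Lemmas \ref{lem:secondOcenter}, \ref{thm:OmiddleXcorner} and \ref{lem:delicate}; you merely make explicit the three-orbit case analysis on X's opening (center, corner, edge) that the paper leaves implicit. No gap.
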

\begin{proof}
This follows by combining Lemmas \ref{lem:secondOcenter}, \ref{thm:OmiddleXcorner}
and \ref{lem:delicate}.\bigskip\end{proof}
\begin{cor}
Picaria is a draw.\end{cor}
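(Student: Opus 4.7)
The plan is a direct synthesis of the two preceding main results. By Theorem~\ref{lem:A}, player~O cannot force a win, and by Theorem~\ref{thm:Player-X-cannot}, player~X cannot force a win. Combining these statements is the entire substance of the corollary; the heavy lifting has already been done in the placement-by-placement and loop analyses of the earlier sections.

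First I would invoke both theorems simultaneously. From the initial empty board, let X follow the non-losing strategy implicit in Theorem~\ref{thm:Player-X-cannot} and let O follow the non-losing strategy implicit in Theorem~\ref{lem:A}. These strategies are mutually executable because each provides a legal response to every move of the opponent, so they yield a well-defined infinite sequence of play. Along this sequence neither side ever completes three-in-a-row, since any win by a player would contradict the corresponding non-winning theorem for the opponent.

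Next I would appeal to finiteness of the state space: the paper records that the sliding phase has only $456$ positions modulo symmetries, and the placement phase contributes only finitely many additional positions. Hence the infinite play produced above must revisit some position, i.e., a pattern is forced to recur by the combined strategies. By the draw convention stated in the Introduction ("a game position is declared a draw if periodicity of a pattern is forced by one of the players"), this recurrence formally constitutes a draw.

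The main obstacle, such as it is, is purely bookkeeping rather than mathematical: one must be comfortable that "X cannot win" and "O cannot win," each proved as an independent strategic statement, can be combined into a single joint play. Since both theorems are proved constructively by exhibiting explicit responses, no compatibility problem arises, and the corollary follows at once.
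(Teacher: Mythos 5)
Your proposal takes essentially the same route as the paper, whose entire proof is to cite Theorems~\ref{lem:A} and \ref{thm:Player-X-cannot}; your additional remarks about combining the two constructive strategies and about recurrence in a finite state space are a correct (and slightly more careful) elaboration of the same one-line argument. One small slip: X's non-losing strategy comes from Theorem~\ref{lem:A} (X prevents O from winning) and O's from Theorem~\ref{thm:Player-X-cannot}, not the other way around as you wrote.
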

\begin{proof}
This is a consequence of Theorems \ref{lem:A} and \ref{thm:Player-X-cannot}.
\end{proof}

\section{\label{sec:counting}The number of positions in the second phase}

To count the number of positions we apply Burnside's Lemma to count
equivalent classes of positions. Besides, we only count the number
of positions for the second phase, i.e., when all six pieces are placed
on the board. We consider the set $P$ the possible positions in the
game and $G$ the group acting on $P$. Denote by $\mathrm{Fix}(g)$
the set of positions in $P$ that are fixed by $g$. Burnside's Lemma
states that the number of orbits is 
\[
\#\mathrm{{orb}}=\frac{1}{\left|G\right|}\sum_{g\in G}\left|\mathrm{Fix}(g)\right|.
\]
The elements in $G$ are composed by the identity, denoted by $e$,
three rotations of the board of $90$, $180$, and $270$ degrees
denoted by $R_{90}$, $R_{180}$, and $R_{270}$. Besides, there are
four reflections, one horizontal $H$, one vertical $V$, and two
diagonal $D_{1}$ and $D_{2}$. Thus

\begin{eqnarray*}
\#\mathrm{{orb}} & = & \frac{1}{8}(\left|\mathrm{Fix}(e)\right|+\left|\mathrm{Fix}(R_{90})\right|+\left|\mathrm{Fix}(R_{180})\right|+\left|\mathrm{Fix}(R_{270})\right|\\
 &  & +\left|\mathrm{Fix}(H)\right|+\left|\mathrm{Fix}(V)\right|+\left|\mathrm{Fix}(D_{1})\right|+\left|\mathrm{Fix}(D_{2})\right|).
\end{eqnarray*}

For the identity $e$ we have $\binom{9}{3,3,3}=\frac{9!}{3!3!3!}$
positions. For $R_{90}$ and $R_{180}$ rotations with a fixed position
the board is of the respective form 

\begin{center}
\textsf{}%
\begin{tabular}{cc|c|cc}
 & b & a & d & \tabularnewline
\cline{2-4} 
 & a & e & c & \tabularnewline
\cline{2-4} 
 & b & c & d & \tabularnewline
\multicolumn{5}{c}{}\tabularnewline
\end{tabular}\textsf{ }and\textsf{ }%
\begin{tabular}{cc|c|cc}
 & d & b & c & \tabularnewline
\cline{2-4} 
 & a & e & a & \tabularnewline
\cline{2-4} 
 & c & b & d & \tabularnewline
\multicolumn{5}{c}{}\tabularnewline
\end{tabular}
\par\end{center}

Here the letters can be X, O, or empty. Notice that $\left|R_{90}\right|=\left|R_{270}\right|$.
If we put all pieces on one of these boards, there would be 4 pairs,
each pair with the same symbol, which is not possible. Thus, the rotations
do not fix any position. 

For diagonal reflections the number of fixed positions are the same
for $D_{1}$ and $D_{2}$. Consider the diagonal $D_{1}$ and horizontal
reflections with a fixed position of the respective forms

\begin{center}
\textsf{}%
\begin{tabular}{cc|c|cc}
 & b & c & f & \tabularnewline
\cline{2-4} 
 & a & e & c & \tabularnewline
\cline{2-4} 
 & d & a & b & \tabularnewline
\multicolumn{5}{c}{}\tabularnewline
\end{tabular}\textsf{ }and\textsf{ }%
\begin{tabular}{cc|c|cc}
 & a & b & c & \tabularnewline
\cline{2-4} 
 & d & e & f & \tabularnewline
\cline{2-4} 
 & a & b & c & \tabularnewline
\multicolumn{5}{c}{}\tabularnewline
\end{tabular}
\par\end{center}

These are the only fixed positions by these group actions. Here the
letters can be X, O, or empty. It follows that a,b, and c are presicely
one each of X, O, or empty, and the same is true for d,e, and f. Thus,
there are $3!3!$ such positions of each kind. Since there are four
reflections, we have in total $3!3!4$ fixed positions. 

Finally, we have 
\begin{eqnarray*}
\#\mathrm{{orb}} & = & \frac{1}{8}(\left|\mathrm{Fix}(e)\right|+\left|\mathrm{{Fix}}(R_{90})\right|+\left|\mathrm{{Fix}}(R_{180})\right|+\left|\mathrm{{Fix}}(R_{270})\right|\\
 &  & +\left|\mathrm{{Fix}}(H)\right|+\left|\mathrm{{Fix}}(V)\right|+\left|\mathrm{{Fix}}(D_{1})\right|+\left|\mathrm{{Fix}}(D_{2})\right|)\\
 & = & \frac{1}{8}\left(\frac{9!}{3!3!3!}+3!3!4\right)=228.
\end{eqnarray*}

Finally, we counted three positions that do not occur. Namely

\noindent \begin{center}
\textsf{}%
\begin{tabular}{cc|c|cc}
 &  & \textsf{o} & \textsf{x} & \tabularnewline
\cline{2-4} 
 &  & \textsf{o} & \textsf{x} & \tabularnewline
\cline{2-4} 
 &  & \textsf{o} & \textsf{x} & \tabularnewline
\multicolumn{5}{c}{}\tabularnewline
\end{tabular}\textsf{ }%
\begin{tabular}{cc|c|cc}
 & \textsf{o} &  & \textsf{x} & \tabularnewline
\cline{2-4} 
 & \textsf{o} &  & \textsf{x} & \tabularnewline
\cline{2-4} 
 & \textsf{o} &  & \textsf{x} & \tabularnewline
\multicolumn{5}{c}{}\tabularnewline
\end{tabular}\textsf{ }%
\begin{tabular}{cc|c|cc}
 &  & \textsf{x} & \textsf{o} & \tabularnewline
\cline{2-4} 
 &  & \textsf{x} & \textsf{o} & \tabularnewline
\cline{2-4} 
 &  & \textsf{x} & \textsf{o} & \tabularnewline
\multicolumn{5}{c}{}\tabularnewline
\end{tabular}\textsf{ }
\par\end{center}

Thus we get $225$ orbits. Now, the position graph for the second
phase of the game contains positions where player X or O is to move.
Thus, the position graph for this game contains $450$ positions.

\section{Final Remarks}

Consider the following natural generalizations of Picaria. We use
the same set of rules and only change the number of sides of the board
as depicted.\smallskip

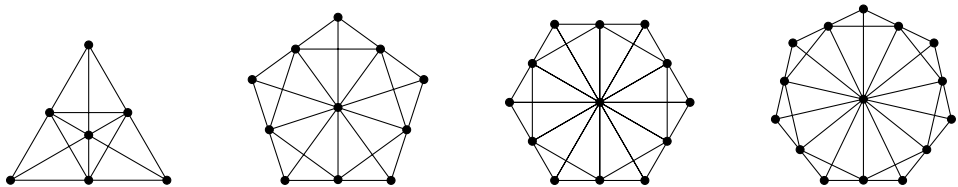
\begin{figure}[H]
\tikzstyle{every node}=[circle, draw, fill=black, inner sep=0pt, minimum width=3pt]
\begin{tikzpicture}[scale=0.3]
\draw[black] \foreach \x in {90,210,330} {         
(\x:4) node{} -- (\x+120:4)        
};
\draw[black] \foreach \x in {30,150,270} {  
(\x:2) node{} -- (\x+120:2)   
(\x:2)  -- (\x+180:4) 
};
\draw[black]{node{}
}; 
\end{tikzpicture}\hspace{9 mm}
\begin{tikzpicture}[scale=0.3]
\draw [black]\foreach \x in {18,90,...,306} {         
(\x:4) node{} -- (\x+72:4)        
};  
\draw [black]\foreach \x in {54,126,...,342} {         
(\x:3.2)  -- (\x+180:4)         
(\x:3.2) node{} -- (\x+72:3.2)    
};    
\draw[black]{node{}
};
\end{tikzpicture}\hspace{9 mm}
\begin{tikzpicture}[scale=0.3]     
\draw[black] \foreach \x in  {60,120,...,360}{        
(\x:4) node{} -- (\x+60:4)       
(\x:4)  -- (\x+180:4) 
};  
\draw[black] \foreach \x in {30,90,...,330} {  
(\x:3.45) node{} -- (\x+60:3.45)  
(\x:3.45)  -- (\x+180:3.45) 
}; 
\draw[black]{node{}
}; 
\end{tikzpicture}\hspace{9 mm}
\begin{tikzpicture}[scale=0.3] \def\a {51.4285714286}; \def\b {64.2857142857}; \def\c {128.571428572};    \draw [black]\foreach \x in {12.8571428571,\b,...,372.8571428571} {         (\x:3.59) node{} -- (\x+\a:3.59) };  \draw[black] \foreach \x in {192.8571428571,244.2857142857,...,552.8571428571} {         (\x:4) node{} -- (\x+\a:4)         (\x:4)  -- (\x+180:3.59) }; \draw[black]{node{}}; \end{tikzpicture}\vspace{12 mm}

\caption{Relatives of Picaria, with three, five, six and seven sides respectively.}
\end{figure}

\vspace{- 5 mm}Following this pattern, there are infinitely many
different board games in this family. One can show that all these
games are first player win in a few moves, except for Picaria, and
we invite the reader to find the play-proofs of this fact. We find
it compelling that the Zuni tribe played the only insteresting game
in this family for centuries.

\subsection{Open problems }

What happens if we increase the number of stones for each player,
say that game parameters, $k\ge 3$ stones each and $s\ge 3$ sides,
are given (otherwise the same rules). Is there any combination $(k,s)$,
other than $(3,4)$, for which the game is a draw (provided that the
total number of stones is less than the number of nodes)? Is it true
that the second player never wins? If we give the second player a
one stone advantage (handicap), for which combination $(k,s)$ can
he draw/win the $((k,k+1),s)$ game (that is the second player places
his last placement stone after the first slide-along-edge move by
ther first player)? In general, how many stones advantage $l > 0$
does he require to draw/win a generalized Picaria?

\end{document}